\title{Local Linear Convergence of Approximate Projections onto Regularized Sets}
\author{D. Russell Luke
\thanks{Institute for Numerical and Applied Mathematics, Universit\"at G\"ottingen, Germany.  
The author was supported by the German DFG grant SFB755-C2.}}
\begin{document}

\maketitle

 \noindent{\bf Key words:} alternating projections, linear convergence, ill-posed, regularization, metric regularity, distance to ill-posedness, 
variational analysis, nonconvex, extremal principle, prox-regular
 
\medskip

\noindent{\bf AMS 2000 Subject Classification:} 49M20, 65K10, 90C30

\begin{abstract}
The numerical properties of algorithms for finding the intersection of sets depend
to some extent on the regularity of the sets, but even more importantly 
on the regularity of the intersection. The alternating projection 
algorithm of von Neumann has been shown to converge locally at a linear rate
dependent on the regularity modulus of the intersection.  In many applications, 
however, the sets in question come from inexact measurements that are 
matched to idealized models.  It is unlikely that any such problems in 
applications will enjoy metrically regular intersection, let alone set intersection.
We explore a regularization strategy that generates an intersection with 
the desired regularity properties.  The regularization, however, 
can lead to a significant increase in computational complexity.  In a further refinement, 
we investigate and prove linear convergence of an approximate alternating projection
algorithm.  The analysis provides a regularization strategy that fits naturally with many 
ill-posed inverse problems, and a mathematically  
sound stopping criterion for extrapolated, approximate algorithms.  The theory is  demonstrated on 
the phase retrieval problem with experimental data.  The conventional early termination applied 
in practice to unregularized, consistent problems in diffraction imaging can be justified fully in the 
framework of this analysis providing, for the first time, proof of convergence of alternating approximate 
projections for finite dimensional, consistent phase retrieval problems.  
\end{abstract}

\section{Introduction}
The role of local regularity for nonconvex minimization problems or nonmonotone variational inequalities 
is well-established.  In broad terms, a generalized equation
is said to be ``regular"  (or ``metrically regular") if the distance from a proposed
solution to an exact solution can be bounded by a constant multiple of the 
model error of the proposed solution.    A particular focus has been the 
proximal point algorithm and alternating projections 
\cite{Ara05, Ius03, Pen02, LewisLukeMalick08}.   

It is often the case, however, that the problems in question are 
{\em ill-posed};  in other words, there is no constant of proportionality
between the model error and the distance of an approximate solution 
to the true solution.  For some algorithms such an ill-posedness would not 
prevent the iterates from converging to a {\em best approximate solution}, but
numerical performance will suffer.  An example of such behavior 
can be observed with the classical alternating projection algorithm of von 
Neumann \cite{Neumann49} applied to a general {\em feasibility problem:} that is,
the problem of finding the intersection of sets.  Ill-posedness for feasibility 
problems can be characterized by problem {\em inconsistency}, that is, 
the nonexistence of an intersection of the sets in question.  More
generally, the feasibility problem will be ill-posed if the intersection 
vanishes under arbitrarily small perturbations of the sets.  
  
For the applications we have in mind, at least 
one of the sets in question comes from a finite precision measurement or 
calculation.  It is quite reasonable to expect an inconsistency between the idealized 
model and the measured data, which can be represented as a perturbation of the 
idealized data  set.  When only two convex sets 
are involved, alternating projections can be shown to converge to 
{\em nearest points} \cite[Theorem 4]{CheneyGoldstein59}, 
however the {\em rate} of convergence will in general be 
arbitrarily slow.  For 
other algorithms ill-posedness leads to instability in the sense that the iterates
do not converge to a fixed point.  The Douglas Rachford 
algorithm, for example, applied to inconsistent feasibility problems has no fixed points
\cite{LionsMercier79,BCL3, Luke08}. 

Insofar as ill-posed problems can be {\em regularized}, the 
theory cited above can be applied to numerical methods for the regularized problems.  Our 
focus here is on a particular regularization for ill-posed feasibility problems and efficient 
{\em approximate} projection algorithms.   The problem of nonconvex best approximation
was considered in  \cite{Luke05a, Luke08} where the focus was on instability of the Douglas Rachford
algorithm resulting from problem inconsistency.  A relaxation of this 
algorithm was proposed that has fixed points for inconsistent problems and has been 
successful in practice \cite{Marchesini07}.  As is often the case for relaxed projection
algorithms, there is no systematic rule for choosing the relaxation parameter.  It was shown
in \cite{Luke08} that the size of the relaxation parameter at the solution is related 
to the optimal gap distance between the sets.  This observation suggests a different 
approach to algorithmic design that is based on regularization of the underlying 
problem rather than stabilization of the algorithm as was the focus in \cite{Luke05a}.  

We further develop this viewpoint here, where we study {\em local} regularization of the 
underlying problem while retaining the character of the original problem.  In 
particular, we expand one of the sets in order to 
create an intersection with all the desired regularity properties described in 
\cite{LewisLukeMalick08}.    
The strategy is a local regularization in the sense that indicator functions
are still used as the central penalty function, in 
contrast to \cite{Luke08} where the indicator function was relaxed to a distance
function.  One then can apply any number of algorithms for finding the intersection 
of  regularized sets.  We are particularly interested in projection algorithms and specifically
the classical alternating projection algorithm.  
We show in section \ref{s:problem} that, for the problems of interest to us, such a 
regularization of the sets results in a significant increase in the complexity of computing the 
corresponding projections.  To address computational complexity of the regularized problem 
we consider approximate alternating projections based on the projection
operators of the original, unregularized problem.  An approximate algorithm 
is stated in section \ref{s:inexact alternating projections}.  
% {\bf Our approach leads to a 
% classical relaxation strategy for linear problems, but an apparently new
% projection algorithm more generally.}  
We prove local linear convergence of this 
algorithm to a solution of the regularized problem under regularity 
assumptions that are natural for regularized problems.  In section 
\ref{s:regularized feasibility} we apply a specific
approximation motivated in section \ref{s:problem} to the approximate 
projection algorithm and prove that this 
approximation is guaranteed to succeed under certain conditions.  
We demonstrate the effectiveness of this approximation in section \ref{s:numerics} with 
an example from diffraction imaging with real experimental data.  We do not claim
that the approximate alternating projection algorithm is the best, or even a very good
strategy for solving this particular problem.  However to our knowledge, our analysis 
yields the first mathematically sound stopping criteria for alternating projections 
applied to the phase retrieval problem.  Our goal is to demonstrate the theory
and to motivate the adaptation of our proposed regularization and approximation
to more sophisticated projection algorithms.  

\section{Notation, Definitions and Basic Theory}
\label{s: notaion}
We begin with basic theory and notation.  For the most part, we present 
only the results with pointers to the literature for interested readers.  
The setting we consider is finite dimensional Euclidean space $\Ebb$.
The closed unit ball centered at $x$ is denoted by $\Ball(x)$; when it is centered at the origin, we 
simply write $\Ball$.  We denote the open interval from $a$ to $b$ by $(a,b)$; the closed
interval is denoted as usual by $[a,b]$.

Given a set $C \subset \Ebb$, we define the {\em distance function} and (multivalued) {\em projection} for $C$ by
\begin{eqnarray*}
d_C(x) & = & d(x,C) ~=~ \inf \{ \|z-x\| : z \in C \} \\
P_C(x) & = & \argmin \{ \|z-x\| : z \in C \}.
\end{eqnarray*}
If $C$ is closed, then the projection is nonempty.  
Following \cite[Definition 1.6]{Mor06} we define the {\em normal cone} to a closed set 
$C\subset \Ebb$ as follows:
\begin{defn}[normal cone]
\label{d:normal cone} 
A vector $v$ is {\em normal} to a closed set $C\subset\Ebb$ at 
$\xbar$, written $v\in N_C(\xbar)$
if there are sequences $x^k\to\xbar$ and $v^k\to v$ with 
\[
v^k\in\set{t(x^k-z)}{t\geq 0,~z\in P_C(x^k)}\quad\mbox{ for all }k\in\Nbb.
\]
The vectors $v^k$ are {\em proximal normals} to $C$ at $z\in P_C(x^k)$ and 
the cone of proximal normals at $z$ is denoted $N^P_C(z)$.   
\end{defn}
It follows immediately from the definition that the normal cone is a closed multifunction:  
for any sequence of points 
$x^k \rightarrow \xbar$ in $C$, 
any limit of a sequence of normals $v^k \in N_C(x^k)$ must lie in $N_C(\xbar)$.
The relation of the projection to the normal cone is also evident from the definition: 
\begin{equation}\label{e:PC to NC}
z \in P_C(x) ~~\Rightarrow~~ x-z \in N_C(z).
\end{equation}
Notice too that $N_C(x)=\{0\}\iff x\in \intr C$. 

\begin{defn}[basic set intersection qualification]  
\label{d:strong regularity}
A family of closed sets $C_1$,$C_2,\ldots$ $C_m$ $\subset \Ebb$ 
satisfies the basic set intersection qualification at a point $\xbar \in \cap_i C_i$, 
if the only solution to 
\[
\displaystyle{\sum_{i=1}^m} y_i  =  0,\quad 
y_i  \in  N_{C_i}(\xbar) ~~ (i=1,2,\ldots,m)
\]
is $y_i = 0$ for $i=1,2,\ldots,m$.  We say that the intersection is {\em strongly regular} at $\xbar$
if the basic set constraint qualification is satisfied there.  
\end{defn}
 In the case $m=2$, this condition can be written
\[
N_{C_1}(\bar x) \cap - N_{C_2}(\bar x) =\{0\}.
\] 
The two set case is is called the {\em basic constraint qualification for sets} in \cite[Definition 3.2]{Mor06} and has its origins 
in the the {\em generalized property of nonseparability} \cite{Mord84} which is the $n$-set case.  It was later recovered as
a dual characterization of what is called {\em strong regularity} of the intersection in \cite[Proposition 2]{Kru06}.  
This property was called {\em linear regularity} in \cite{LewisLukeMalick08}.
The case of two sets also yields the following simple quantitative characterization of strong regularity.
\begin{propn}[Theorem 5.16 of \cite{LewisLukeMalick08}]
\label{t:cbar}
Suppose that $C_1$ and $C_2$ are closed subsets of $\Ebb$.  The intersection 
$C_1\cap C_2$ satisfies the basic set intersection qualification at $\xbar$ if and only if the constant 
\begin{equation}\label{e:cbar}
\cbar  ~\equiv~ \max \set{\ip{u}{v}}{u \in N_{C_1}(\xbar) \cap \Ball,~ v \in -N_{C_2}(\xbar) \cap \Ball}<1.
\end{equation}
\end{propn}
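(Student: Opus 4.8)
The plan is to reduce the entire equivalence to the rigidity (equality) case of the Cauchy--Schwarz inequality, after first checking that the constant $\cbar$ in \eqref{e:cbar} is genuinely attained and bounded above by $1$. Since $\Ebb$ is finite dimensional and each normal cone is closed (as observed immediately after Definition \ref{d:normal cone}), the feasible sets $N_{C_1}(\xbar)\cap\Ball$ and $-N_{C_2}(\xbar)\cap\Ball$ are compact and nonempty (each contains $0$), so their product is compact and the continuous map $(u,v)\mapsto\ip{u}{v}$ attains its maximum at some pair $(u,v)$. For every feasible pair, Cauchy--Schwarz gives $\ip{u}{v}\le\norm{u}\,\norm{v}\le 1$, so $\cbar\le 1$ always holds; the only question is whether $\cbar=1$ or $\cbar<1$.

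Next I would dispatch the easy implication: if the basic set intersection qualification fails, then $\cbar=1$. Failure means there is a nonzero $w\in N_{C_1}(\xbar)\cap -N_{C_2}(\xbar)$. Normalizing, $u:=w/\norm{w}$ is a unit vector, and because each normal cone is invariant under multiplication by positive scalars, $u$ still lies in both cones; thus $u\in N_{C_1}(\xbar)\cap\Ball$ and simultaneously $u\in -N_{C_2}(\xbar)\cap\Ball$. Selecting this common $u$ in both slots of the maximand yields $\ip{u}{u}=\norm{u}^2=1$, so $\cbar\ge 1$, and combined with the upper bound we get $\cbar=1$.

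The converse is the step I would treat most carefully, since it is where the rigidity of Cauchy--Schwarz (together with the attainment established above) actually does the work. Suppose $\cbar=1$, attained at a feasible pair $(u,v)$, so $\norm{u}\le 1$, $\norm{v}\le 1$, and $\ip{u}{v}=1$. The chain $1=\ip{u}{v}\le\norm{u}\,\norm{v}\le 1$ must then hold with equality at every step: equality in $\norm{u}\,\norm{v}\le 1$ forces $\norm{u}=\norm{v}=1$, and equality in the Cauchy--Schwarz bound for two unit vectors forces $u=v$. Hence $u=v$ is a unit vector lying at once in $N_{C_1}(\xbar)$ and in $-N_{C_2}(\xbar)$, so $N_{C_1}(\xbar)\cap -N_{C_2}(\xbar)\ne\{0\}$ and the qualification fails.

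Contraposing the last paragraph shows that the basic set intersection qualification implies $\cbar<1$, while the second paragraph shows the reverse implication; together these give the asserted equivalence. There is no deep obstacle here beyond two routine but essential points that I would not skip, namely the compactness argument guaranteeing that the supremum defining $\cbar$ is attained (so that $\cbar=1$ really produces an extremal pair rather than a limiting one), and the precise use of the equality case of Cauchy--Schwarz to identify $u$ with $v$ as a single common unit normal.
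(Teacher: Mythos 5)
The paper never proves this proposition: it is imported wholesale as Theorem 5.16 of \cite{LewisLukeMalick08}, so there is no in-paper argument to set yours against; the comparison can only be between your proof and the cited source. Judged on its own, your proof is correct and complete. The two ingredients you isolate are exactly the right ones: first, that $N_{C_1}(\xbar)\cap\Ball$ and $-N_{C_2}(\xbar)\cap\Ball$ are nonempty (both contain $0$) compact sets --- normal cones are closed, as noted after Definition \ref{d:normal cone}, and $\Ebb$ is finite dimensional --- so the maximum in \eqref{e:cbar} is genuinely attained; and second, the rigidity case of Cauchy--Schwarz. Both directions go through as you describe: failure of the qualification yields a common unit vector (positive homogeneity of the cones justifies the normalization), which placed in both slots gives $\cbar=1$; conversely $\cbar=1$ plus attainment forces $\norm{u}=\norm{v}=1$ and $u=v$ through equality in the Cauchy--Schwarz chain, producing a nonzero element of $N_{C_1}(\xbar)\cap -N_{C_2}(\xbar)$. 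Your emphasis on attainment is well placed: it is the one point where compactness is indispensable, since a supremum equal to $1$ over a noncompact feasible set need not produce an extremal pair, and the argument would collapse to nothing. This is essentially the same duality/compactness reasoning as in the cited reference, so your write-up can be read as supplying, in self-contained and elementary form, the proof the paper chose to delegate.
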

% \begin{proof}
% Intersections between normal cones and the unit ball are compact, hence there 
% exist $u$ and $v$ where the maximum in \eqref{e:cbar} is attained. 
% It follows that 
% \[
% \ip{u}{v}\leq \norm{u}\,\norm{v}\leq 1
% \]
% If Cauchy-Schwarz inequality holds with equality, then 
% \[
% \cbar =1  \iff \mbox{$u$ and $v$ are colinear} \iff N_{C_1}(\xbar) \cap -N_{C_2}(\xbar) \neq \{0\},
% \]
% which completes the proof.
% \end{proof} 

\noindent 
\begin{defn}[angle of regular intersections]
We say that the intersection $C_1\cap C_2$ is  
{\em strongly regular at $\xbar$ with angle $\thetabar\equiv\cos^{-1}(\cbar)>0$}
where $\cbar$ is given by \eqref{e:cbar}.
\end{defn}

% For the case of two sets the basic set intersection qualification is satisfied at point $x\in C_1\cap C_2$ if and only if 
% there exist constants $\alpha,\delta > 0$ such that
% \begin{equation}\label{e:strong reg}
% \alpha \rho \Ball  ~\subset~ ((C_1-x) \cap \rho \Ball ) - ((C_2-z) \cap \rho \Ball )
% \end{equation}
% for all points $x \in C_1\cap\delta\Ball(\xbar)$ and $z \in C_2\cap\delta\Ball(\xbar)$ and all 
% $\rho\in[0,\delta)$.  
% The basic set intersection qualification is 
% % a weaker condition than 
% related to local extremality in the sense of \cite{Mor06}, where a point $\xbar$ is 
% {\bf not} {\em locally extremal} if and only if $\xbar$ remains in the intersection under small 
% perturbations of the sets:  
% \begin{equation}\label{d:extremal}
%  0 \in \intr \Big( ((C_1  - \xbar) \cap \rho \Ball) - ((C_2 - \xbar) \cap \rho \Ball) \Big)
% ~~\mbox{for all}~ \rho > 0.
% \end{equation}
% The case  $x=z=\xbar$ in \eqref{e:strong reg} 
% shows that local extremality at $\xbar$ implies that the 
% set intersection qualification is not satisfied there.  
% the converse is not true in general
%  implies that $\xbar$ is not locally extremal.  

In order to achieve linear rates of convergence of alternating projections to 
the intersection of sets, we
require pointwise strong regularity of the intersection \cite{LewisLukeMalick08}.  
In the absence of this property the above definitions suggest a general regularization 
philosophy: {\em promote strong regularity}.  This 
is most obviously achieved by augmenting at least one of the sets
by some $\epsilon$ ball:  $C_1(\epsilon)=C_1+\epsilon\Ball$, for instance.   
Similar ideas been used extensively in the development of proximally smooth 
sets by Clarke, Stern and Wolenski \cite{ClarkeSternWolenski95}.  
We pursue  this idea in section \ref{s:problem} with the generalization that the 
ball, or ``tube'' around the set of interest is with respect to a generic distance in the image 
space of a continuous mapping, the tube having no relation to the native space in which  
the projectors onto the sets are defined.  

% The next straightforward result is a first step toward quantifying the regularity of the intersection 
% with respect to Definition \ref{d:strong regularity}.
% \begin{propn} [Corollary 1.1 of \cite{Kru04}]
%   A collection of closed sets $C_1,C_2,\ldots,C_m \subset \Ebb$ 
%   satisfy the basic set intersection qualification at a point $\xbar \in \cap C_i$ 
%   if and only if there exists a constant $k > 0$ such that the following condition holds:
%   \begin{equation} \label{e:cond mod}
%   y_i \in N_{C_i}(\xbar) ~~(i=1,2,\ldots,m)~~ \Rightarrow~~
%   \sqrt{\sum_i \|y_i\|^2} \le k \Big\| \sum_i y_i \Big\|.
%   \end{equation}
% \end{propn}
% 
% \noindent The {\em condition modulus} $\mbox{cond}(C_1,C_2,\ldots,C_m | \xbar)$ is 
% the infimum of all constants $k > 0$ such that property (\eqref{e:cond mod}) holds.  
% 
Somewhat stronger results are possible when the sets have additional 
regularity.  We call a set $C \subset \Ebb$ is {\em prox-regular} at a point 
$\xbar \in C$ if the projection mapping $P_C$ is single-valued around $\xbar$ \cite{PolRockThib00}.  
Convex sets, in particular, are prox-regular.  More generally, 
any set defined by $C^2$ equations and inequalities is prox-regular at any 
point satisfying the Mangasarian-Fromovitz constraint qualification, for instance.  

\begin{propn}[angle of normals of prox-regular set] 
\label{t:prox angle}
Suppose the set $C \subset \Ebb$ is prox-regular at the point $\xbar \in C$.  
Then for any constant $\delta > 0$, any points $y,z \in C$ near $\xbar$ 
and any normal vector $v \in N_C(y)$ satisfy the inequality
\[
\ip{v}{z-y} \le \delta\|v\| \cdot \|z-y\|.
\]
\end{propn}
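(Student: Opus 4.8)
The plan is to reduce the assertion to the standard \emph{uniform quadratic normal-cone estimate} that characterizes prox-regular sets, and then to convert that quadratic bound into the desired linear bound by restricting $y$ and $z$ to a sufficiently small neighborhood of $\xbar$. First I would invoke the characterization of prox-regularity from \cite{PolRockThib00}: since $P_C$ is single-valued around $\xbar$, there exist a constant $r>0$ and a radius $\epsilon>0$ such that, for all $y,z\in C$ with $\norm{y-\xbar}<\epsilon$ and $\norm{z-\xbar}<\epsilon$ and every $v\in N_C(y)$,
\[
\ip{v}{z-y}\le\frac{\norm{v}}{2r}\norm{z-y}^2.
\]
Both sides are positively homogeneous of degree one in $v$, so it suffices to verify this for normalized $v$, and the restriction $\norm{v}\le\epsilon$ that sometimes appears in the definition is therefore immaterial.

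The second step is purely arithmetic. Fix $\delta>0$ and set $\epsilon_\delta\equiv\min\{\epsilon,\,r\delta\}$. If $y,z\in C$ satisfy $\norm{y-\xbar}<\epsilon_\delta$ and $\norm{z-\xbar}<\epsilon_\delta$, then the triangle inequality gives $\norm{z-y}\le\norm{z-\xbar}+\norm{\xbar-y}<2\epsilon_\delta\le 2r\delta$, whence $\norm{z-y}/(2r)\le\delta$. Substituting into the quadratic estimate yields
\[
\ip{v}{z-y}\le\frac{\norm{z-y}}{2r}\,\norm{v}\,\norm{z-y}\le\delta\,\norm{v}\,\norm{z-y},
\]
which is exactly the claimed inequality, with ``near $\xbar$'' read as ``within $\epsilon_\delta$ of $\xbar$.''

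I expect the substantive content to lie entirely in the first step, that is, in passing from single-valuedness of $P_C$ to the uniform quadratic estimate; this is precisely the equivalence established in \cite{PolRockThib00}, which I would simply cite. If a self-contained argument were wanted, I would first prove the estimate for \emph{proximal} normals: if $v\in N^P_C(y)$ is realized by $y\in P_C(y+\lambda v)$ for some $\lambda>0$, then comparing $\norm{(y+\lambda v)-y}$ with $\norm{(y+\lambda v)-z}$ for $z\in C$ and expanding the square gives $\ip{v}{z-y}\le\norm{z-y}^2/(2\lambda)$. The genuine difficulty is then to show that prox-regularity forces a \emph{uniform} lower bound $\lambda\ge r$ valid for all $y$ near $\xbar$ and all unit proximal normals, and finally to pass to general normals $v\in N_C(y)$ as limits of proximal normals, using the closedness of the normal cone noted after Definition \ref{d:normal cone}. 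Once the uniform estimate is secured, the conversion to the linear bound for arbitrarily small $\delta$ is immediate, since $\norm{z-y}$ can be made as small as one pleases by shrinking the neighborhood.
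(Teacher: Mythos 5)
Your proof is correct and takes essentially the same route as the paper's own direct argument: both invoke the quadratic normal-cone estimate for prox-regular sets from \cite{PolRockThib00} (Proposition 1.2 there) and then absorb the extra factor $\|z-y\|/(2r)$ into $\delta$ by localizing near $\xbar$. The only cosmetic difference is that the paper phrases the localization with sequences $y^k,z^k\to\xbar$, whereas you make the radius $\epsilon_\delta=\min\{\epsilon,\,r\delta\}$ explicit.
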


\begin{proof}
This is a special case of the same property for 
{\em super regular sets} (\cite[Definition 4.3]{LewisLukeMalick08}
and \cite[Proposition 4.4]{LewisLukeMalick08}) since
by \cite[Proposition 4.9]{LewisLukeMalick08} prox-regularity implies 
super regularity. 

Alternatively, for prox-regular sets we can proceed directly
from \cite[Proposition 1.2]{PolRockThib00} which shows that, 
for any sequences of 
points $y^k,z^k \in C$ converging to $\xbar$ and any
corresponding sequence of normal vectors 
$v^k \in N_C(y^k)$,
there exist 
constants $\epsilon, \rho > 0$ such that
\[
\ip{\frac{\epsilon}{2\|v^k\|} v^k}{z^k - y^k}
\le \frac{\rho}{2} \|z^k - y^k\|^2
\]
for all large $k$.  Since for any fixed $\delta>0$ we will eventually have
$\|z^k - y^k\| \le \frac{\delta \epsilon}{\rho}$, it 
follows that 
\[
\ip{v^k}{z^k-y^k} \leq \delta\|v^k\| \cdot \|z^k-y^k\|
\]
for $k$ large enough.
\end{proof}

The next result builds upon Proposition \ref{t:prox angle} and 
provides bounds on the angle between sets in the neighborhood
of a point in a strongly regular intersection of a closed and a prox-regular set.  
In \cite[Theorem 5.2]{LewisLukeMalick08} implications \eqref{e:cond1} and
\eqref{e:cond2} are used to characterize sets for which linear convergence of the alternating projections algorithm holds.  
We do not seek such generality here
and are content with identifying classes of sets which satisfy these conditions, 
namely prox-regular sets.   The proof of the following assertion can be found
in the proof of Theorem 5.16 of \cite{LewisLukeMalick08}.
\begin{propn}
\label{t:bones}
Let $M,C \subset \Ebb$ be closed.  Suppose that $C$ is prox-regular 
at a point $\xbar \in M \cap C$ and that $M$ and $C$ have strongly 
regular intersection at $\xbar$ with angle $\thetabar$.  Define
$\cbar\equiv\cos(\thetabar)$ and   
fix the constant $c'$ with $\cbar<c'<1$.  
There exists a constant $\epsilon > 0$ 
such that
\begin{equation} \label{e:cond1}
\left.
\begin{array}{cc}
x \in M \cap (\xbar + \epsilon \Ball), &  u \in -N_M(x) \cap \Ball  \\
y \in C \cap (\xbar + \epsilon \Ball), &  v \in N_C(y) \cap \Ball
\end{array}
\right\}
~~\implies~~ \ip{u}{v} \le c',
\end{equation}
and, for some constant $\delta \in [0,\frac{1-c'}{2})$,
\begin{equation}\label{e:cond2}
\left.
\begin{array}{rcl}
y,z & \in & C \cap (\xbar + \epsilon \Ball) \\
  v & \in & N_C(y) \cap \Ball
\end{array}
\right\}
~~\implies~~ \ip{v}{z-y} \le \delta\|z-y\|.
\end{equation}
\end{propn}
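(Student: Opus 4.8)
The plan is to prove the two implications independently, producing radii $\epsilon_1$ and $\epsilon_2$, and then take $\epsilon=\min\{\epsilon_1,\epsilon_2\}$. Implication \eqref{e:cond2} is almost a verbatim consequence of Proposition \ref{t:prox angle}, whereas \eqref{e:cond1} is the real content and will be handled by a compactness-and-contradiction argument anchored in the characterization of $\cbar$ given in Proposition \ref{t:cbar}.

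For \eqref{e:cond2} I first fix a value of $\delta$ in the prescribed range, say $\delta=(1-c')/4$, which lies in $[0,\frac{1-c'}{2})$ and is strictly positive because $c'<1$. Since $C$ is prox-regular at $\xbar$, Proposition \ref{t:prox angle} applied with this $\delta$ yields a radius $\epsilon_2>0$ such that $\ip{v}{z-y}\le\delta\|v\|\cdot\|z-y\|$ for all $y,z\in C\cap(\xbar+\epsilon_2\Ball)$ and every $v\in N_C(y)$. Restricting to $v\in N_C(y)\cap\Ball$ forces $\|v\|\le 1$, so $\ip{v}{z-y}\le\delta\|z-y\|$, which is exactly \eqref{e:cond2}.

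The substance is \eqref{e:cond1}, which I would argue by contradiction. If no radius worked, then for each $k\in\Nbb$ there would exist $x^k\in M\cap(\xbar+\tfrac{1}{k}\Ball)$, $u^k\in-N_M(x^k)\cap\Ball$, $y^k\in C\cap(\xbar+\tfrac{1}{k}\Ball)$, and $v^k\in N_C(y^k)\cap\Ball$ with $\ip{u^k}{v^k}>c'$. By construction $x^k\to\xbar$ and $y^k\to\xbar$, and since $\Ball$ is compact in the finite-dimensional space $\Ebb$, after passing to a subsequence I may assume $u^k\to u$ and $v^k\to v$ with $u,v\in\Ball$. Invoking the closedness of the normal-cone multifunction recorded just after Definition \ref{d:normal cone}: from $-u^k\in N_M(x^k)$ with $-u^k\to -u$ I obtain $-u\in N_M(\xbar)$, i.e. $u\in-N_M(\xbar)\cap\Ball$, and likewise $v\in N_C(\xbar)\cap\Ball$. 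Passing to the limit in the strict inequality gives $\ip{u}{v}\ge c'$.

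To close the contradiction I compare $\ip{u}{v}$ with $\cbar$. Setting $a:=-u$ and $b:=-v$, I have $a\in N_M(\xbar)\cap\Ball$ and $b\in -N_C(\xbar)\cap\Ball$, so by the formula \eqref{e:cbar} for $\cbar$ (Proposition \ref{t:cbar} with $C_1=M$ and $C_2=C$) we have $\ip{u}{v}=\ip{a}{b}\le\cbar=\cos\thetabar<c'$, contradicting $\ip{u}{v}\ge c'$. Hence some $\epsilon_1>0$ validates \eqref{e:cond1}, and $\epsilon=\min\{\epsilon_1,\epsilon_2\}$ finishes the proof. The only delicate point --- modest, but the heart of the matter --- is this limiting step: one must use outer semicontinuity of $N_M$ and $N_C$ to keep the limiting normals inside the normal cones at $\xbar$, and track the two sign conventions carefully so that $\ip{u}{v}$ matches precisely the quantity maximized in the definition of $\cbar$.
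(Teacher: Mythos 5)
Your proof is correct, and it follows essentially the same route as the paper's own argument, which is simply a citation to the proof of Theorem 5.16 in \cite{LewisLukeMalick08}: there, too, condition \eqref{e:cond2} comes from prox-regularity (super-regularity) exactly as in your use of Proposition \ref{t:prox angle}, and condition \eqref{e:cond1} comes from the definition of $\cbar$ in \eqref{e:cbar} combined with outer semicontinuity (closedness) of the normal cone mapping, which is precisely your compactness-and-contradiction step. Your sign bookkeeping ($-u\in N_M(\xbar)\cap\Ball$, $-v\in-N_C(\xbar)\cap\Ball$, $\ip{-u}{-v}=\ip{u}{v}\le\cbar<c'$) is the right way to match the quantity maximized in Proposition \ref{t:cbar}, so no gap remains.
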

% \begin{proof}
% By Proposition \ref{t:prox angle} condition \eqref{e:cond2} 
% holds for all $\epsilon > 0$ small enough.  
% If condition 
% \eqref{e:cond1} fails for all small $\epsilon > 0$, then there exist sequences of points 
% $x^k \to \xbar$ in the set $M$ and $y^k \to \xbar$ in the set $C$, and sequences of vectors
% $u^k \in -N_M(x^k) \cap \Ball$ and $v^k \in N_C(y^k) \cap \Ball$, satisfying $\ip{u^k}{v^k} > c'$.  
% Passing to a subsequence if necessary, we can suppose $u^k$ approaches some vector
% $u \in -N_M(\xbar) \cap \Ball$ and $v^k$ approaches some vector $v \in N_C(\xbar) \cap \Ball$, and then 
% $\ip{u}{v} \ge c' > \cbar$, contradicting the definition of the constant $\cbar$ given
% by \eqref{e:cbar}.  This 
% completes the proof.
% \end{proof}

In what follows, we define an approximate alternating projection algorithm in terms of 
the distance of the normal cone associated with the approximate projection to the 
``true'' normal cone.  In order to guarantee that for our proposed approximation we 
can get arbitrarily close to the true projection, we need the notion of convergence
of the associated normal cone mappings.    Let $\mmap{S}{\Ebb}{\Ybb}$ denote
a set-valued mapping where $\Ybb$ is another Euclidean space.  We define the 
domain of $S$ to be the set of points whose image is not empty, that is
\[
   \dom S\equiv\set{x}{S(x)\neq\emptyset}.
\]
Following \cite[Definition 4.1]{VA} we define continuous set-valued mappings relative to 
some subset $D$ as those which are both outer and inner semicontinuous 
relative to $D$.
\begin{defn}[continuity of set-valued mappings]
\label{d:set continuity}
A set-valued mapping $\mmap{S}{\Ebb}{\Ybb}$ is continuous at a point
$\xbar\in D$ relative to  $D\subset\Ebb$ if 
\begin{eqnarray*}
&&\!\!\!\!\!\!\!\!\!S(\xbar)\subset\\
&& \!\!\!  \set{y}{\forall~x^k\attains{D} \xbar, ~\exists~ K>0 \mbox{ such that for }k>K,~ y^k\to y \mbox{ with } y^k\in S(x^k) }
\nonumber
\end{eqnarray*}
and
\[
   \set{y}{\exists~x^k\attains{D} \xbar,~\exists~ y^k\to y~\mbox{ with }y^k\in S(x^k)}
\subset S(\xbar)
\]
where $\attains{D}$ indicates that the sequence lies within $D$. We denote this as
$S(x)\to S(\xbar)$ for all sequences $x\attains{D}\xbar$.
\end{defn} 

\section{The problem}
\label{s:problem}
In this section we formulate our abstract problem and 
motivate the regularization and approximation strategies 
that we propose.
Our initial, naive problem formulation involves finding points $x\in C\subset\Ebb$, 
a Euclidean space,  
that explain some measurement $b\in \Ybb$ modeled as the image of 
the continuous mapping $\map{g}{\Ebb}{\Ybb}$, that is
\[
\mbox{Find }x\in C\cap M_0
\]  
for 
\[
% \begin{equation}\label{e:M0}
M_0\equiv \set{x\in\Ebb}{g(x)=b}.
\]
% \end{equation}

The set $C$ usually captures a qualitative feature of solutions, such 
as nonnegativity, or a prescribed support.   
If $b$ is a physical/empirical measurement, it is likely 
that the intersection is empty, or that the solution consists only of 
extremal points.  
In the case of measurements with discrepancies modeled by statistical
noise, the noise could be Gaussian or 
Poisson distributed (among still other possibilities).  To accommodate 
a variety of instances we consider
the following regularizations of the set $M_0$:
\begin{equation}\label{e:fatM}
M_\epsilon\equiv \set{x\in\Ebb}{d_\phi(g(x),b)\leq \epsilon}
\end{equation}
where $\epsilon\geq 0$ and $d_\phi$ is a {\em Bregman distance} 
defined by
\[
d_\phi(z,y)\equiv \phi(z)-\phi(y)-\phi'(y)(z-y)
\]
for $\map{\phi}{\Ybb}{\extre}$ strictly convex and differentiable on $\intr(\dom \phi)$ .
The Bregman distance with $\phi\equiv \frac12\|\cdot\|^2$ corresponds to the 
Euclidean norm which is appropriate for Gaussian noise.  If $\Ybb=\Rm$ and 
\[
\phi(y) = \sum_{j=1}^m h(y_j)\quad\mbox{ for } 
h(t)\equiv\begin{cases}t\log t - t &\mbox{ for }t>0\\
0& \mbox{ for }t=0\\
+\infty& \mbox{ for }t<0
\end{cases}
\]
then the Bregman distance leads to the {\em Kullback-Leibler divergence},
\begin{equation}\label{e:KL}
d_\phi(z,y)=KL(x,y)\equiv \sum_{j=1}^m z_j\log\frac{z_j}{y_j}+y_j-z_j.
\end{equation}
The Kullback-Leibler divergence is appropriate for Poisson noise.

\begin{remark}
The regularization \eqref{e:fatM} bears some resemblance to closed neighborhoods of the 
type $X(\epsilon)\equiv\set{x}{d(x,X)\leq \epsilon}$ considered by Clarke, Stern and 
Wolenski \cite{ClarkeSternWolenski95} in their development of proximally smooth 
sets, except that the neighborhood  around the set of interest is with respect to a 
generic distance in the image space of a continuous mapping, the neighborhood having no relation 
to the metric upon which  the projectors onto the sets are defined.  
Still, we will rely on prox-regularity of the regularized set for the  
approximation strategy discussed in Section \ref{s:regularized feasibility}.
\endproof
\end{remark}

Regardless of the distance, the first algorithm we consider for finding this 
intersection is the classical
alternating projection algorithm.
\begin{alg}[exact alternating projections]\label{alg:exact ap}
\begin{description}
\item Choose $x^0 \in C$.  
For $k=1,2,3,\ldots$ 
generate the sequence $\{x^{2k}\}\subset C$ with
$
x^{2k} \in P_C(x^{2k-1})
$
where the sequence $\{x^{2k+1}\}$ consists of points 
$x^{2k+1}\in P_{M_\epsilon}(x^{2k})$.
\end{description}
\end{alg}
\noindent We show next that the projection onto the fattened set $M_\epsilon$
could be considerably more costly to calculate than 
for the unregularized set $M_0$.  This motivates the approximate
projection algorithm studied in section \ref{s:inexact alternating projections}

We want to compute
\[
 x^*\in P_{M_\epsilon}(\xhat)\equiv \argmin_{x\in M_\epsilon}\tfrac12 \|x-\xhat\|^2.
\]
Assume $d_\phi(g(\xhat),b)> \epsilon$, then we seek a solution on the 
$\epsilon$-sphere
around $b$ with respect to $d_\phi$.  This is an instance of a {\em trust region}
problem.

Suppose that $\xbar\in P_{M_\epsilon}(\xhat)$ 
and that the standard constraint qualification holds, that is
\begin{equation}\label{e:Vanderslice}
 -\nabla d_\phi(g(\xbar,b))^*\eta=0, \quad \eta\geq 0\quad\implies\quad \eta=0.
\end{equation}
Then 
 \begin{eqnarray}\label{e:KKT1}
  (\xbar-\xhat) + \nabla d_\phi(g(\xbar),b)^*\etabar&=&0\qquad(\etabar\geq 0)\\
  d_\phi(g(\xbar),b)-\epsilon&=&0.
\label{e:KKT2}
 \end{eqnarray}
These are the standard KKT conditions (see, for example \cite[Theorem 10.6]{VA}). 
Numerical methods for computing the projection $P_{M_\epsilon}(\xhat)$ involve
solving a possibly large-scale nonlinear system of equations with respect to $x$ and $\eta$; 
this could well be as difficult to solve as the original problem.

\begin{eg}[affine subspaces]
\label{eg:linear}
Let $\Ebb=\Rn$, $\Ybb=\Rm$ with $m<n$.  Take $g$ to be the linear mapping 
$\map{A}{\Rn}{\Rm}$ and $d_\phi(x,y)=\frac{1}{2}\|x-y\|^2$
(that is, $\phi(x)=\frac12\|x\|^2$).  
The projection can then be written as the solution to a quadratically
constrained quadratic program:
\[
\cmin{\tfrac12 \|x-z\|^2}{x\in \Rn}{\frac{1}{2}\|Ax-b\|^2\leq \epsilon.}
\]
For small problem sizes this can be efficiently solved via interior point methods.   
Still, even the most efficient numerical methods cannot compare to computing the 
projection onto the affine space $M_0\equiv \set{x\in \Ebb}{Ax=b}$ which has the 
trivial closed form 
\[
P_{M_0}(z) = (I-A^T(AA^T)^{-1}A)z + A^T(AA^T)^{-1}b.
\]
This suggests an alternative strategy for computing the
projection onto the ``fattened'' set.  

Indeed, we can efficiently compute the projection $P_{M_\epsilon}(z)$ 
as a convex combination of the 
points $y=P_{M_0}(z)$ and $z$
\[
x^*= \lambda_\epsilon z + (1-\lambda_\epsilon)y
\]
where $\lambda_\epsilon\in [0,1)$ solves $\frac{1}{2}(1-\lambda)^2\|z-y\|^2=\epsilon$.
This also has a closed form: the quadratic formula.  For general Bregman distances
such shortcuts are not available, but this forms the basis for our approximations.  
$\Box$
\end{eg}
\begin{eg}[boxes]
Let $\Ebb=\Ybb=\Rn$.  Define $\map{g}{\Rn}{\Rn}$ by 
\[
g(x)=\left(|x_1|^2,\dots,|x_n|^2\right)^T
\] 
and, again, let the distance $d_\phi$ be the standard
normalized squared Euclidean distance to some point $b\in \Rnp$.  
The projection can then be written as the solution to the nonconvex optimization 
problem
\[
\cmin{\tfrac12 \|x-\xhat\|^2}{x\in \Rn}{\frac12\sum_{j=1}^n (|x_j|^2-b_j)^2=\epsilon.}
\]
Notice that the corresponding set 
$M_\epsilon$ is not convex:  the origin is projected in the positive and negative direction 
in each component.  Generally, nonconvex problems are hard to solve. 
On the other hand, the projection onto the box with length $2b$,
$y = (y_1,\dots,y_n)^T\in P_{M_0}(\xhat)$, is trivial and has the form 
\[
y_j   \begin{cases}=
b_j\frac{\xhat_j}{|\xhat_j|}& \xhat_j \neq 0\\
\in\{-b_j,b_j\}&\xhat_j=0.
\end{cases}
\]
See \cite{BurkeLuke03} for analysis of this projection in higher dimensional 
product spaces.   

For this example there is no shortcut to computing the projection $P_{M_\epsilon}$ for $\epsilon>0$, 
but we show below that the convex combination of the projection of $\xhat$ onto 
$M_0$ and $\xhat$ is a effective approximation that still yields linear rates of 
convergence for the method of alternating projections for finding the intersection 
of $M_\epsilon\cap C$.  
$\Box$
\end{eg}

\begin{remark}\label{r:911}
We note that in both of the above examples the constraint
qualification \eqref{e:Vanderslice}  is no longer satisfied in 
the limit $\epsilon=0$ for the set $M_\epsilon$.  This obviously 
does not prevent us from calculating the projection onto the set 
$M_0$.  Indeed, as we showed, the projection sometimes even has 
an explicit representation.
\endproof
\end{remark}

\section{Inexact alternating projections}
\label{s:inexact alternating projections}
There is more than one way to formulate inexact algorithms.  One template 
for this is to add summable error terms to the operators involved in the 
exact algorithm.  Another approach -- 
the one we take here -- is less general but  
has a more geometric appeal.  More to the point, it is appropriate for our 
intended application.

Given two iterates $x^{2k-1} \in M$ and $x^{2k} \in C$, a necessary condition 
for the new iterate $x^{2k+1}$ to be an exact projection on $M$, that is
$x^{2k+1} \in P_M(x^{2k})$, is
\[
\|x^{2k+1} - x^{2k}\| \le \|x^{2k} - x^{2k-1}\|
~~\mbox{and}~~
x^{2k} - x^{2k+1} \in N_M(x^{2k+1}).
\]
In a modification of \cite{LewisLukeMalick08} we assume only that we choose the odd
iterates $x^{2k+1}$ 
to satisfy a relaxed version of this condition, where we replace the second 
part by the assumption that the distance of 
the normalized direction of the current step to the normal cone to $M$ at the intersection 
of the boundary of $M$ with the line 
segment between $x^{2k+1}$ and $x^{2k}$ is small.

Consider the following inexact alternating projection iteration for
finding the intersection of two sets $M,C \subset \Ebb$.
\begin{alg}[inexact alternating projections]\label{alg:inexact ap}
\begin{description}
\item Fix $\gamma>0$ and choose $x^0 \in C$ and $x^1 \in M$.  
For $k=1,2,3,\ldots$ 
generate the sequence $\{x^{2k}\}\subset C$ with
$
x^{2k} \in P_C(x^{2k-1})
$
where the sequence $\{x^{2k+1}\}\subset M$ satisfies
\begin{subequations}\label{e:pooping}
\begin{eqnarray}
&& \|x^{2k+1} - x^{2k}\| \le \|x^{2k}-x^{2k-1}\|,\label{e:pooping_a}\\
&&x^{2k+1} = x^{2k}\quad\mbox{ if  }~x_*^{2k+1}=x^{2k},\label{e:pooping_b} \\
\mbox{ and }&& d_{N_M(x_*^{2k+1})}(\zhat^k)\le \gamma\label{e:pooping_c}
\end{eqnarray}
\end{subequations}
for 
\[
x_*^{2k+1}= P_{M\cap\{x^{2k}-\tau\zhat^k,~\tau\ge 0\}}(x^{2k})
\]
and
\[
\zhat^k\equiv \begin{cases}
 \frac{x^{2k} - x^{2k+1}}{\|x^{2k} - x^{2k+1}\|} &\mbox{ if }~x_*^{2k+1}\neq x^{2k}\\
0&\mbox{ if }~ x_*^{2k+1}=x^{2k}.
\end{cases}
\]
\end{description}
\end{alg}
Note that the odd iterates $x^{2k+1}$ can lie on the interior of $M$.  
This is the major difference between Algorithm \ref{alg:inexact ap} and the one specified in 
\cite{LewisLukeMalick08} where all of the iterates are assumed to lie on the boundary
of $M$.  We include this feature to allow for {\em extrapolated} iterates in the case 
where $M$ has interior.  Extrapolation, or over relaxation, is a common 
technique for accelerating algorithms, though its basis is rather 
heuristic.  Empirical experience reported in the literature shows that extrapolation can  be quite 
effective  (see \cite{Spingarn85, Combettes97b}).  The algorithm given in Theorem \ref{t:implementation}
below explicitly includes extrapolation.  Our numerical results at the end of this paper do 
not contradict the conventional experience with extrapolation.  

Lemma  \ref{t:Elephant} and Theorem \ref{t:approx proj} below were 
sketched in \cite[Theorem 6.1]{LewisLukeMalick08} for the variation of 
Algorithm \ref{alg:inexact ap} just described.  
\begin{lemma}\label{t:Elephant}
Let $M,C \subset \Ebb$ be closed.  Suppose that $C$ is prox-regular 
at a point $\xbar \in M \cap C$ and that $M$ and $C$ have strongly 
regular intersection at $\xbar$ with angle $\thetabar$.  Define
$\cbar\equiv\cos(\thetabar)$ and   
fix the constants $c$ with $\cbar<c<1$
and $\gamma < \sqrt{1-c^2}$.   Then there is an $\epsilon>0$ such that 
the iterates of Algorithm
\ref{alg:inexact ap} satisfy
\begin{equation}\label{e:Sesamstrasse}
\left.\begin{array}{cc}
\|x^{2k+1}-\xbar\|&\leq\frac{\epsilon}{2}\\
\|x^{2k+1}-x^{2k}\|&\leq\frac{\epsilon}{2}
\end{array}\right\}~~\implies~~\|x^{2k+2}-x^{2k+1}\|\leq \eta
\|x^{2k+1}-x^{2k}\|
\end{equation}
for $\eta= c\sqrt{1-\gamma^2}+\gamma\sqrt{1-c^2}<1$. 
\end{lemma}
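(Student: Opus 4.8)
The plan is to follow the sharp‑angle contraction argument behind Theorem 6.1 of \cite{LewisLukeMalick08}, modified to absorb the inexactness \eqref{e:pooping_c}. Write $p=x^{2k}\in C$, $q=x^{2k+1}\in M$, $r=x^{2k+2}\in P_C(q)$ and $x_*=x_*^{2k+1}$, and assume the two hypotheses $\|q-\xbar\|\le\tfrac{\epsilon}{2}$ and $\|q-p\|\le\tfrac{\epsilon}{2}$. First I would dispose of the degenerate cases: if $x_*=q=p$ then \eqref{e:pooping_b} gives $q=p$ and hence $r=P_C(p)=p$, so the step vanishes; and if $r=q$ the conclusion is trivial. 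So assume $q\neq p$ and $r\neq q$, making $\zhat^k=(p-q)/\|p-q\|$ and $v\equiv(q-r)/\|q-r\|$ well‑defined unit vectors, with $v\in N_C(r)\cap\Ball$ by \eqref{e:PC to NC}. I would then invoke Proposition \ref{t:bones} with an intermediate constant $\ctilde\in(\cbar,c)$ (the strict gap $\cbar<c$ provides the slack), obtaining $\epsilon>0$, the angle bound \eqref{e:cond1} with constant $\ctilde$, and — shrinking $\epsilon$ as prox‑regularity permits — the bound \eqref{e:cond2} with a $\delta>0$ that I take small later. A short computation places all four points in $\xbar+\epsilon\Ball$: $\|p-\xbar\|\le\|p-q\|+\|q-\xbar\|\le\epsilon$; $\|r-q\|=d_C(q)\le\|p-q\|\le\tfrac{\epsilon}{2}$ so $\|r-\xbar\|\le\epsilon$; and $x_*$ lies on the segment $[p,q]$, hence $\|x_*-\xbar\|\le\epsilon$.

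The core decomposition is then
\[
\|r-q\|^2=\ip{q-r}{q-p}+\ip{q-r}{p-r}.
\]
The second term is controlled by prox‑regularity: since $p,r\in C\cap(\xbar+\epsilon\Ball)$ and $v\in N_C(r)\cap\Ball$, \eqref{e:cond2} yields $\ip{q-r}{p-r}\le\delta\|q-r\|\,\|p-r\|$. For the first term, $q-p=-\|q-p\|\zhat^k$, so it reduces to a lower bound on $\ip{v}{\zhat^k}$, and everything hinges on the geometric claim $\ip{v}{\zhat^k}\ge-\etatilde$ with $\etatilde\equiv\ctilde\sqrt{1-\gamma^2}+\gamma\sqrt{1-\ctilde^2}$.

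This lower bound is the heart of the matter and the step I expect to be the main obstacle, because it must be sharp in $\gamma$: a naive Cauchy--Schwarz split of $\zhat^k$ only gives the lossy estimate $\ctilde+\gamma$, whose worst case exceeds $\eta$. I would let $w=P_{N_M(x_*)}(\zhat^k)$, so $\|\zhat^k-w\|\le\gamma$ by \eqref{e:pooping_c}. Ray‑optimality of the projection onto the cone $N_M(x_*)$ forces $\ip{\zhat^k}{w}=\|w\|^2$ and $\|w\|\le1$, whence $\|\zhat^k-w\|^2=1-\|w\|^2$ and therefore $\|w\|\ge\sqrt{1-\gamma^2}>0$; in particular $w\neq0$ and $\what\equiv w/\|w\|\in N_M(x_*)$ is a unit normal. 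Applying \eqref{e:cond1} with $x=x_*$, $u=-\what\in-N_M(x_*)\cap\Ball$ and $v\in N_C(r)\cap\Ball$ gives $\ip{v}{\what}\ge-\ctilde$. The two facts $\ip{\zhat^k}{\what}=\|w\|\ge\sqrt{1-\gamma^2}$ and $\ip{-v}{\what}\le\ctilde$ say, in the geodesic metric on the unit sphere, that the angle from $\zhat^k$ to $\what$ is at most $\arcsin\gamma$ while the angle from $-v$ to $\what$ is at least $\arccos\ctilde$; the spherical triangle inequality then forces the angle from $-v$ to $\zhat^k$ to be at least $\arccos\ctilde-\arcsin\gamma\in(0,\pi)$, so that
\[
\ip{-v}{\zhat^k}\le\cos\!\left(\arccos\ctilde-\arcsin\gamma\right)=\etatilde.
\]
(Equivalently, decompose $\zhat^k$ and $v$ along $\what$ and its orthogonal complement and minimise $\ip{v}{\zhat^k}$ over the admissible component ranges; the minimum is exactly $-\etatilde$.)

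Finally I would assemble. Inserting the two term estimates, using $\ip{q-r}{q-p}\le\etatilde\|q-r\|\,\|q-p\|$, and dividing by $\|r-q\|\neq0$ gives $\|r-q\|\le\etatilde\|q-p\|+\delta\|p-r\|$; then $\|p-r\|\le\|p-q\|+\|q-r\|$ yields $\|r-q\|\le\frac{\etatilde+\delta}{1-\delta}\|q-p\|$. Since $\ctilde<c$ and $c\mapsto\cos(\arccos c-\arcsin\gamma)$ is increasing, we have $\etatilde<\eta$, so choosing $\delta$ small enough (only a further shrinking of $\epsilon$) makes $\frac{\etatilde+\delta}{1-\delta}\le\eta$, which is exactly \eqref{e:Sesamstrasse}. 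The bound $\eta<1$ itself is precisely the statement that $\arccos c-\arcsin\gamma>0$, which is guaranteed by $\gamma<\sqrt{1-c^2}$.
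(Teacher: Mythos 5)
Your proof is correct and follows essentially the same route as the paper's own: the same intermediate constant ($\ctilde$ playing the role of the paper's $c'$), the same invocation of Proposition \ref{t:bones} for \eqref{e:cond1}--\eqref{e:cond2}, the same localization estimates, the same key inequality $\ip{\what}{\zhat^k}\ge\sqrt{1-\gamma^2}$ combined with the arccos arithmetic to get $\ctilde\sqrt{1-\gamma^2}+\gamma\sqrt{1-\ctilde^2}$, and the same decomposition of $\|x^{2k+2}-x^{2k+1}\|^2$ into the two inner products controlled by the angle bound and by prox-regularity. The only deviations are cosmetic bookkeeping: you pin down $\|w\|$ via cone-projection orthogonality where the paper optimizes over $\|w\|\in[1-\gamma,1+\gamma]$, and you close with $(\etatilde+\delta)/(1-\delta)\le\eta$ for small $\delta$ where the paper uses $\|x^{2k+2}-x^{2k}\|\le 2\|x^{2k+1}-x^{2k}\|$ and fixes $\delta=\tfrac12(\eta-\eta')$ at the outset.
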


\begin{proof}
Fix  $c'$ with $\cbar< c'<c<1$ and define 
$
\delta = \tfrac12\left(\eta - \eta'\right)
$
where $\eta' = c'\sqrt{1-\gamma^2}+\gamma\sqrt{1-c'^2}$.  
($\delta>0$ since, as is easily verified, 
$c\sqrt{1-\gamma^2}+\gamma\sqrt{1-c^2}$ increases monotonically
with respect to $c$ on $[0,1]$.)  Since $C$ is prox-regular at $\xbar$ and 
the intersection is strongly regular, by Proposition \ref{t:bones}
for this $\delta$ there is an $\epsilon>0$ such that
implications \eqref{e:cond1} and \eqref{e:cond2} hold.  We apply this
result here.

The assumptions and the triangle inequality yield
\begin{equation}\label{e:playing}
\|x^{2k}-\xbar\|\leq \|x^{2k}-x^{2k+1}\|+\|\xbar-x^{2k+1}\|\leq \epsilon.
\end{equation}
By the definition of $x_*^{2k+1}$ we have $x_*^{2k+1} = (1-\lambda)x^{2k}+\lambda x^{2k+1}$
for some $\lambda\in [0,1]$ so that 
\begin{eqnarray}
\|x^{2k+1}_* - \xbar\| &=& \|\lambda (x^{2k+1}-\xbar)+(1-\lambda)(x^{2k}-\xbar)\|\nonumber\\
&\leq&\lambda\|x^{2k+1}- \xbar\| + (1-\lambda)\|x^{2k}- \xbar\|\nonumber\\
&\leq&\lambda\frac\epsilon2 +(1-\lambda)\epsilon\leq\epsilon\quad(\lambda\in[0,1])
\label{e:at home}
\end{eqnarray}
where the last inequality combines the left hand side of\eqref{e:Sesamstrasse} and \eqref{e:playing}.  
Next, by  the triangle inequality and the definition of the projection
\begin{eqnarray}
\|x^{2k+2}-\xbar\|&\leq& \|x^{2k+2}-x^{2k+1}\| + \|\xbar-x^{2k+1}\|\nonumber\\
&\leq& \|x^{2k}-x^{2k+1}\|+ \|\xbar-x^{2k+1}\|\leq \epsilon.
\label{e:Rolling}
\end{eqnarray}
If $x^{2k+1}=x^{2k}$ 
then this is a fixed point of the algorithm and the result is trivial.  Similarly, if
$x^{2k+1}=x^{2k+2}$, then $x^{2k+1}\in C\cap M$ and by the first condition in 
\eqref{e:pooping} this is a fixed point of the algorithm.  So we assume that
 $x^{2k+1}\neq x^{2k}$ and define $\what\in N_M(x^{2k+1}_*)$ with $\|\what\|=1$ and 
$\uhat\equiv\frac{x^{2k+2}-x^{2k+1}}{\| x^{2k+2}-x^{2k+1} \|}$.
Now applying Proposition \ref{t:bones} to $x^{2k+1}_*$ satisfying \eqref{e:at home}
with $-\what\in -N_M(x^{2k+1}_*)\cap \Ball$ and to $x^{2k+2}$ 
satisfying \eqref{e:Rolling} with $-\uhat\in N_C(x^{2k+2})\cap \Ball$ we have that 
for $\epsilon $ small enough 
\begin{equation}
\label{e:sleep}
\ip{\what}{\uhat} =\ip{-\what}{-\uhat}\leq c'.
\end{equation} 
In other words the  angular separation between the unit vectors $\what$ and  
 $\uhat$ is bounded below by $\arccos c'$.

On the other hand,  
define 
\[
\zhat \equiv \frac{x^{2k} - x^{2k+1}}{\|x^{2k} - x^{2k+1}\|}.
\]
Our goal is to obtain a lower bound the angle between $\zhat$ and $\uhat$.  
If it were the case that $x^{2k+1} \in P_M(x^{2k})$ then $\zhat=\what$ and 
$c'$ would already be our bound.  But since $x^{2k+1}$
only approximates the projection, we must work a little harder.
Since the iterates satisfy \eqref{e:pooping}, for some $w \in N_M(x^{2k+1}_*)$ we have
 $\|w-\zhat\| \le \gamma$.  

There are two cases to consider.  If $\zhat=0$, then we are done.  Otherwise
 $\zhat$ has length one, and
\[
\frac{\|w\|^2 + 1- \gamma^2}{2\|w\|}\leq\ip{\frac{w}{\|w\|}}{\zhat}.
\]
Maximizing the left hand side as a function of $\|w\|\in [1-\gamma,1+\gamma]$ 
yields the largest possible angular separation from $\zhat$, that is
\begin{equation}\label{e:Ella}
\ip{\what}{\zhat}\geq \sqrt{1-\gamma^2}
\end{equation}
where $\what = \frac{w}{\|w\|}$.

Note that 
$\gamma<\sqrt{1-c^2}<\sqrt{1-c'^2}$ for $c'<c$ so that $c'<\sqrt{1-\gamma^2}$.  
Thus, combining \eqref{e:sleep} and \eqref{e:Ella}, we have
\begin{eqnarray*}
\ip{\what}{\zhat}\geq\sqrt{1-\gamma^2}&>&c'\geq\ip{\what}{\uhat}\\
&\iff&\\
\arccos\ip{\what}{\zhat}\leq \arccos(\sqrt{1-\gamma^2})&<&\arccos c'\leq \arccos\ip{\what}{\uhat}.
\end{eqnarray*}
It follows immediately, then, that 
\begin{eqnarray*}
0&<&\arccos c' - \arccos(\sqrt{1-\gamma^2})\\
&<& \arccos\ip{\what}{\uhat}-\arccos\ip{\what}{\zhat}
\leq \arccos\ip{\uhat}{\zhat}
\end{eqnarray*}
which is equivalent to
\[
\ip{\zhat}{\uhat} ~\le~ \cos \Big( \arccos c' - \arccos(\sqrt{1-\gamma^2}) \Big)
~=~
c'\sqrt{1-\gamma^2} + \gamma \sqrt{1-c'^2}<1.
\]
Letting $\eta' = c'\sqrt{1-\gamma^2} + \gamma \sqrt{1-c'^2}$ and 
removing the normalization yields
\begin{equation}\label{e:Hermann}
\ip{x^{2k}-x^{2k+1}}{x^{2k+2}-x^{2k+1}}\leq \eta'\|x^{2k}-x^{2k+1}\|
\|x^{2k+2}-x^{2k+1}\|.
\end{equation}
Now by our choice of $\epsilon$,  implication \eqref{e:cond2} holds
for $x^{2k}$ and $x^{2k+2}\in C\cap\{\xbar+\epsilon \Ball\}$ with 
$-\uhat\in N_C(x^{2k+2})\cap\Ball$, namely
\[
\ip{-\uhat}{x^{2k}-x^{2k+2}}\leq \delta\|x^{2k+2}-x^{2k}\|
\]
which is equivalent to 
\[
\ip{x^{2k+2}-x^{2k+1}}{x^{2k+2}-x^{2k}}\leq \delta\|x^{2k+2}-x^{2k}\|
\|x^{2k+2}-x^{2k+1}\|.
\]
By the triangle inequality and the definition of the projection 
\[
\|x^{2k+2}-x^{2k}\|\leq \|x^{2k+2}-x^{2k+1}\|+
\|x^{2k+1}-x^{2k}\|\leq 2\|x^{2k+1}-x^{2k}\|
\]
so that 
\begin{equation}\label{e:Foege}
\ip{x^{2k+2}-x^{2k+1}}{x^{2k+2}-x^{2k}}\leq 2\delta\|x^{2k+1}-x^{2k}\|
\|x^{2k+2}-x^{2k+1}\|.
\end{equation}
Adding \eqref{e:Hermann} and \eqref{e:Foege} yields
\[
\|x^{2k+2}-x^{2k+1}\|^2\leq \left( 2\delta +\eta'\right)
\|x^{2k+1}-x^{2k}\|
\|x^{2k+2}-x^{2k+1}\|,
\]
which by our construction of $\delta$ yields 
\[
\|x^{2k+2}-x^{2k+1}\|\leq \eta \|x^{2k+1}-x^{2k}\|
\]
as claimed.  
\end{proof}
\begin{lemma}\label{t:Orton}
With the same assumptions as Lemma \ref{t:Elephant}, choose $x^0$ and $x^1$ so that 
\begin{equation}\label{e:sunny}
 \|x^1-\xbar\|\leq \|x^0-\xbar\|=\beta<\frac{1-\eta}{4}\epsilon
\end{equation}
 where
$\epsilon$ is chosen to satisfy \eqref{e:Sesamstrasse}.
Let  $\eta= c\sqrt{1-\gamma^2}+\gamma\sqrt{1-c^2}$.  Then for all $k\geq 0$
\begin{subequations}
\begin{eqnarray}
\|x^{2k+1}-\xbar\|&\leq & 2\beta\frac{1-\eta^{k+1}}{1-\eta}<\frac{\epsilon}{2},
\label{e:dining}\\
\|x^{2k+1}-x^{2k}\|&\leq & \beta\eta^k<\frac{\epsilon}{2}~~\mbox{ and }
\label{e:room}\\
\|x^{2k+2}-x^{2k+1}\|&\leq & \beta\eta^{k+1}.
\label{e:table}
\end{eqnarray}
\end{subequations}
If in addition $M$ is prox-regular at $\xbar$, then for all $k\geq 0$
\begin{subequations}
\begin{eqnarray}
\|x^{k+1}-\xbar\|&\leq & 2\beta\frac{1-\eta^{k+1}}{1-\eta}<\frac{\epsilon}{2},
\label{e:dining_prox}\\
\|x^{k+1}-x^{k}\|&\leq& \beta\eta^k<\frac{\epsilon}{2}~~\mbox{ and }
\label{e:room_prox}\\
\|x^{k+2}-x^{k+1}\|&\leq & \beta\eta^{k+1}.
\label{e:table_prox}
\end{eqnarray}
\end{subequations}
 \end{lemma}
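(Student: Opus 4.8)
The plan is to establish the three even-index estimates \eqref{e:dining}, \eqref{e:room}, \eqref{e:table} \emph{simultaneously} by induction on $k$, and then to upgrade them to the consecutive-index estimates \eqref{e:dining_prox}--\eqref{e:table_prox} by a symmetry argument once $M$ is also prox-regular. The engine of the induction is Lemma \ref{t:Elephant}, which supplies a one-step contraction of the $C$-projection step whenever the two smallness hypotheses in \eqref{e:Sesamstrasse} hold. The reason for dragging \eqref{e:dining} and \eqref{e:room} through the induction is precisely that their strict bounds $<\epsilon/2$ are exactly the hypotheses that \eqref{e:Sesamstrasse} requires, so the induction is self-sustaining: one uses the estimates at stage $k$ to license Lemma \ref{t:Elephant} at stage $k$, and the contraction it returns feeds the estimates at stage $k+1$.

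Concretely, for the inductive step I would chain three facts. First, assuming \eqref{e:dining} and \eqref{e:room} at index $k$ (which give $\|x^{2k+1}-\xbar\|\le\epsilon/2$ and $\|x^{2k+1}-x^{2k}\|\le\epsilon/2$), Lemma \ref{t:Elephant} yields $\|x^{2k+2}-x^{2k+1}\|\le\eta\|x^{2k+1}-x^{2k}\|\le\beta\eta^{k+1}$, which is \eqref{e:table} at $k$. Second, the admissibility condition \eqref{e:pooping_a} at index $k+1$ gives $\|x^{2k+3}-x^{2k+2}\|\le\|x^{2k+2}-x^{2k+1}\|\le\beta\eta^{k+1}$, i.e. \eqref{e:room} at $k+1$. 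Third, to re-establish \eqref{e:dining} at $k+1$ I would sum the now-bounded consecutive steps: the triangle inequality gives $\|x^{2k+3}-\xbar\|\le\|x^1-\xbar\|+\sum_{i=1}^{2k+2}\|x^{i+1}-x^i\|$, and grouping the odd/even step-bounds from \eqref{e:room} and \eqref{e:table} turns the sum into $\beta(1+\eta)\sum_{j\ge0}\eta^j$, so that with the initial term it is at most $2\beta/(1-\eta)$ and, in fact, the sharper finite geometric bound $2\beta\frac{1-\eta^{k+1}}{1-\eta}$. The hypothesis $\beta<\tfrac{1-\eta}{4}\epsilon$ from \eqref{e:sunny} is exactly what forces this below $\epsilon/2$, closing the induction. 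The base case comes directly from \eqref{e:sunny} together with the distance-reducing property of the exact projection, $\|x^2-x^1\|=d(x^1,C)\le\|x^1-\xbar\|\le\beta$ (since $\xbar\in C$), with Lemma \ref{t:Elephant} entering once the odd iterates are genuine approximate projections; the degenerate fixed-point situations are disposed of by \eqref{e:pooping_b} exactly as at the start of the proof of Lemma \ref{t:Elephant}.

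For the second part I would observe that the only half-steps not yet shown to contract by $\eta$ are the approximate $M$-projection steps $x^{2j}\to x^{2j+1}$; with $M$ prox-regular at $\xbar$, Proposition \ref{t:bones} applies with the roles of $M$ and $C$ interchanged, producing the analogues of \eqref{e:cond1} and \eqref{e:cond2} for $M$. I would then re-run the geometric argument of Lemma \ref{t:Elephant} on the triple $x^{2j-1},x^{2j},x^{2j+1}$: here the \emph{incoming} direction is an exact $C$-normal (since $x^{2j}\in P_C(x^{2j-1})$), while the \emph{outgoing} direction is the approximate $M$-projection direction controlled by \eqref{e:pooping_c}, so the approximation parameter $\gamma$ now sits on the step being contracted rather than on the incoming step. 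The same angle-subtraction identity still produces $\ip{\zhat}{\uhat}\le c'\sqrt{1-\gamma^2}+\gamma\sqrt{1-c'^2}$, and prox-regularity of $M$ (in place of $C$) converts it into $\|x^{2j+1}-x^{2j}\|\le\eta\|x^{2j}-x^{2j-1}\|$. Once every consecutive step contracts by $\eta$, the identical geometric-series/triangle-inequality bookkeeping yields \eqref{e:dining_prox}--\eqref{e:table_prox}. I expect the main obstacle to be this mirrored re-derivation: one must verify that moving the approximation onto the contracted step does not break the estimate, and in particular that the extrapolated or interior $M$-iterates (the very reason the intermediate point $x_*^{2j+1}$ is needed) are correctly controlled by the prox-regularity of $M$ rather than being assumed to lie on $\partial M$.
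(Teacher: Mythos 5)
Your first part is, in substance, the paper's own proof: the same self-sustaining induction in which \eqref{e:pooping_a} hands the bound \eqref{e:table} at stage $k$ to \eqref{e:room} at stage $k+1$, the triangle inequality re-establishes \eqref{e:dining}, and Lemma \ref{t:Elephant} contracts the new $C$-projection step; your summing of all steps back to $x^1$ instead of the paper's three-term triangle inequality against the inductive hypothesis is an immaterial variation, and your base-case bookkeeping (you bound $\|x^2-x^1\|$ by $\beta$ rather than addressing $\|x^1-x^0\|$ for \eqref{e:room} and invoking Lemma \ref{t:Elephant} for \eqref{e:table}) is no looser than the paper's own.

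The genuine gap is in your second part, and it sits exactly where the paper itself waves its hands (``Lemma \ref{t:Elephant} with the roles of $C$ and $M$ reversed''). You correctly diagnose that the role reversal is not literal, because the algorithm is asymmetric --- $C$-steps are exact projections, $M$-steps are approximate and possibly extrapolated --- and you propose to re-run the geometry with $\gamma$ on the contracted step. The angle estimate $\ip{\zhat}{\uhat}\le c'\sqrt{1-\gamma^2}+\gamma\sqrt{1-c'^2}$ does survive this mirroring, but the conversion you then assert does not. In the proof of Lemma \ref{t:Elephant}, the inequality \eqref{e:Foege} is what turns the angle bound into a length contraction, and it uses two things simultaneously: the step direction $x^{2k+1}-x^{2k+2}$ lies \emph{exactly} in $N_C(x^{2k+2})$, and the cone is based \emph{at the endpoint} of the step, so that \eqref{e:cond2} applies with $y=x^{2k+2}$, $z=x^{2k}$. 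In the mirrored triple $x^{2j-1},x^{2j},x^{2j+1}$, the vector $x^{2j}-x^{2j+1}$ is only $\gamma$-close to a normal cone based at the intermediate point $x_*^{2j+1}$, not at $x^{2j+1}$: the $\gamma$-error injects a term of size $\gamma\|x^{2j+1}-x^{2j-1}\|\,\|x^{2j+1}-x^{2j}\|$ (and $\gamma$, unlike $\delta$, is not at your disposal), while the extrapolated segment from $x_*^{2j+1}$ to $x^{2j+1}$ contributes a further positive term of order $\|x^{2j+1}-x^{2j}\|-\|x_*^{2j+1}-x^{2j}\|$ that nothing controls. Indeed no repair is possible at this level of generality, because the claimed per-step contraction is false for extrapolated iterates: take $M=\set{x\in\Rtw}{x_2\le 0}$ and $C=\set{t(\cos\theta,\sin\theta)}{t\in\Rbb}$ with $\theta=\pi/4$ (both convex, intersection strongly regular at $0$), $x^{2j-1}=(r,0)$, $x^{2j}=P_C(x^{2j-1})=(r/2,r/2)$, and $x^{2j+1}=x^{2j}-(0,r/\sqrt{2})$. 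Then $x^{2j+1}\in M$, the direction $x^{2j}-x^{2j+1}$ is exactly the outward normal of $M$, so \eqref{e:pooping_a}--\eqref{e:pooping_c} all hold, yet $\|x^{2j+1}-x^{2j}\|=\|x^{2j}-x^{2j-1}\|$, defeating any factor $\eta<1$ and hence \eqref{e:room_prox}--\eqref{e:table_prox}. The mirrored contraction can only be rescued if extrapolation is excluded on the $M$-steps (e.g.\ $x^{2j+1}=x_*^{2j+1}$, or exact projections); to be fair, the paper's one-line proof of this half suffers from precisely the same objection, so your instinct that this re-derivation is ``the main obstacle'' was right --- it is an obstacle your proposal does not overcome.
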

\begin{proof}
The proof is by induction.  For the case $k=0$ inequality \eqref{e:dining}  holds 
trivially.  Inequality \eqref{e:room} follows from the triangle
inequality and   \eqref{e:sunny}.   Inequality \eqref{e:table}
then follows from \eqref{e:dining}, \eqref{e:room} and 
Lemma \ref{t:Elephant}.   Since for the case $k=0$ inequalities 
\eqref{e:dining}-\eqref{e:table} are equivalent to \eqref{e:dining_prox}-\eqref{e:table_prox}
this case is true whether $M$ is prox-regular or not.  

To show that these relations hold for $k+1$ with $M$ {\em not} prox-regular, note
that by \eqref{e:pooping} 
$\|x^{2k+3}-x^{2k+2}\|\leq \|x^{2k+2}-x^{2k+1}\|$.  In light of 
\eqref{e:table} this implies
\begin{equation}\label{e:room2}
\|x^{2k+3}-x^{2k+2}\|\leq \beta\eta^{k+1}<\frac{\epsilon}{2}.
\end{equation}
This together with \eqref{e:dining} and \eqref{e:table}, yields
\begin{eqnarray}
\|x^{2k+3}-\xbar\|&\leq& \|x^{2k+3}-x^{2k+2}\| + \|x^{2k+2}-x^{2k+1}\|+
\|x^{2k+1}-\xbar\|\nonumber\\
&\leq& \beta\eta^{k+1} + \beta\eta^{k+1}+
2\beta\frac{1-\eta^{k+1}}{1-\eta}\nonumber\\
&\leq &2\beta\eta^{k+1} +2\beta\frac{1-\eta^{k+1}}{1-\eta} = 
2\beta\frac{1-\eta^{k+2}}{1-\eta}< \frac{\epsilon}{2}.
\label{e:dining2}
\end{eqnarray}
Now, Lemma \ref{t:Elephant} applied to \eqref{e:room2} and 
\eqref{e:dining2} yields
\begin{equation}\label{e:table2}
\|x^{2k+4}-x^{2k+3}\|\leq \eta\|x^{2k+3}-x^{2k+2}\| \leq
\beta\eta^{k+2}.
\end{equation}
As \eqref{e:room2}-\eqref{e:table2} are just \eqref{e:dining}-\eqref{e:table}
with $k$ replaced by  $k+1$, this completes the induction and the proof for the 
case where $M$ is not prox-regular.

If we assume, in addition, that $M$ is prox-regular, then by \eqref{e:table_prox}
\begin{equation}\label{e:room_prox2}
\|x^{k+2}-x^{k+1}\|\leq \beta\eta^{k+1}<\frac{\epsilon}{2}.
\end{equation}
This together with \eqref{e:dining_prox} yields 
\begin{eqnarray}
\|x^{k+2}-\xbar\|\leq \|x^{k+2}-x^{k+1}\| +\|x^{k+1}-\xbar\|\nonumber\\
&\leq& \beta\eta^{k+1}+\beta\frac{1-\eta^{k+1}}{1-\eta}\nonumber\\
&=&\beta\frac{1-\eta^{k+2}}{1-\eta}\leq \frac{\eta}{2}
\label{e:dining_prox2}   
\end{eqnarray}
Now Lemma \ref{t:Elephant} with the rolls of $C$ and $M$ reversed, together 
with \eqref{e:table_prox} yields
\begin{equation}\label{e:table_prox2}
   \|x^{k+3}-x^{k+2}\|\leq \eta\|x^{k+2}-x^{k+1}\|\leq \beta\eta^{k+2}
\end{equation}
Again, since \eqref{e:room_prox2}-\eqref{e:table_prox2} are just \eqref{e:dining_prox}-\eqref{e:table_prox}
with $k$ replaced by  $k+1$, this completes the induction and the proof.
\end{proof}

\begin{thm}[convergence of inexact alternating projections]
\label{t:approx proj}
Let $M,C \subset \Ebb$ and  suppose $C$ is prox-regular at a point
$\xbar \in M \cap C$.  Suppose furthermore that $M$ and $C$ have strongly 
regular intersection at $\xbar$ with angle $\thetabar$.  Define
$\cbar\equiv\cos(\thetabar)<1$ and   
fix the constants $c \in (\cbar,1)$  
and $\gamma < \sqrt{1-c^2}$. 
For $x^0$ and $x^1$ close enough to $\xbar$, the iterates in Algorithm
\ref{alg:inexact ap} converge to a point in
$M \cap C$ with R-linear rate
\[
\sqrt{ c\sqrt{1-\gamma^2} + \gamma \sqrt{1-c^2} } ~<~ 1.
\]
If, in addition, $M$ is prox-regular at $\xbar$, then  
the iterates converge with R-linear rate
\[
c\sqrt{1-\gamma^2} + \gamma \sqrt{1-c^2}  ~<~ 1.
\]
\end{thm}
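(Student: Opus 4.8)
The plan is to obtain convergence and both rate statements almost mechanically from the step-length estimates already assembled in Lemma~\ref{t:Orton}, so that the theorem reduces to summing a geometric series. First I would fix $c\in(\cbar,1)$ and $\gamma<\sqrt{1-c^2}$, set $\eta=c\sqrt{1-\gamma^2}+\gamma\sqrt{1-c^2}<1$, and take the $\epsilon>0$ furnished by Lemma~\ref{t:Elephant}. I would then read the hypothesis ``$x^0$ and $x^1$ close enough to $\xbar$'' as exactly the requirement \eqref{e:sunny}, i.e.\ $\|x^1-\xbar\|\le\|x^0-\xbar\|=\beta<\tfrac{1-\eta}{4}\epsilon$, which is the standing assumption of Lemma~\ref{t:Orton}. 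With this, every geometric estimate I need is already in hand.

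In the general case I would use \eqref{e:room} and \eqref{e:table} to bound the consecutive step lengths by $\|x^{2k+1}-x^{2k}\|\le\beta\eta^k$ and $\|x^{2k+2}-x^{2k+1}\|\le\beta\eta^{k+1}$. Setting $r=\sqrt\eta$, both are dominated by $\beta r^n$ for the step from $x^n$ to $x^{n+1}$, since $\eta^k=r^{2k}$ and $\eta^{k+1}=r^{2k+2}\le r^{2k+1}$. Hence for $N\le n$,
\[
\|x^{n}-x^{N}\|\;\le\;\sum_{m=N}^{n-1}\|x^{m+1}-x^m\|\;\le\;\beta\sum_{m\ge N}r^m\;=\;\frac{\beta\,r^N}{1-r},
\]
so $\{x^k\}$ is Cauchy and converges to some $x^*$, and letting $n\to\infty$ gives $\|x^N-x^*\|\le\frac{\beta}{1-r}r^N$, which is R-linear convergence with rate $r=\sqrt\eta$. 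To identify the limit I would note that the even iterates lie in the closed set $C$ and the odd iterates in the closed set $M$; since both subsequences converge to $x^*$, closedness forces $x^*\in M\cap C$.

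Under the additional hypothesis that $M$ is prox-regular at $\xbar$, I would instead invoke \eqref{e:room_prox}--\eqref{e:table_prox}, which sharpen the estimate to $\|x^{k+1}-x^k\|\le\beta\eta^k$ at \emph{every} index rather than every other one. The same telescoping computation with $r$ replaced by $\eta$ then gives $\|x^N-x^*\|\le\frac{\beta}{1-\eta}\eta^N$, i.e.\ R-linear rate $\eta=c\sqrt{1-\gamma^2}+\gamma\sqrt{1-c^2}$.

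Essentially all the content is carried by Lemmas~\ref{t:Elephant} and~\ref{t:Orton}, so I do not anticipate a serious obstacle. The one point requiring care is the index bookkeeping distinguishing the two cases: in general the contraction factor $\eta$ is realized only over a pair of half-steps, which is why the per-iterate rate degrades to $\sqrt\eta$, whereas prox-regularity of $M$ symmetrizes the roles of $C$ and $M$ and restores the contraction at each single step, giving the full rate $\eta$. Beyond keeping that shift straight and writing the geometric tail bound cleanly, the argument is routine.
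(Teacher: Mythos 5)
Your proof is correct and takes essentially the same route as the paper's: both deduce the Cauchy property and the convergence rate directly from the geometric step bounds of Lemma~\ref{t:Orton}, sum the geometric tail, identify the limit in $M\cap C$ via closedness of the two sets, and handle the prox-regular case by substituting the per-step estimates \eqref{e:room_prox}--\eqref{e:table_prox}. Your uniform per-step bound $\beta r^n$ with $r=\sqrt{\eta}$ is merely a slightly cleaner bookkeeping of the paper's grouped sum $\beta(\eta^k+2\eta^{k+1}+2\eta^{k+2}+\cdots)$.
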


\noindent
\begin{proof}
We prove in detail the case where $M$ is {\em not} assumed to be prox-regular.
Choose $x^0$ and $x^1$ so that \eqref{e:sunny} holds with 
$\epsilon$ is chosen as in Lemma \ref{t:Elephant}.
Let  $\eta= c\sqrt{1-\gamma^2}+\gamma\sqrt{1-c^2}$.  
To establish convergence of the sequence we 
check that the iterates form a Cauchy sequence.  To see this, note that for any
integer $k=0,1,2,\dots$ and any integer $j>2k$, by  \eqref{e:room} and 
\eqref{e:table} of Lemma \ref{t:Orton} we have 
\begin{eqnarray*}
\|x^j-x^{2k}\|&\leq& \sum_{i=2k}^{j-1}\|x^{i+1}-x^i\|\\
&\leq&\beta\left(\eta^k +2\eta^{k+1} +2\eta^{k+2} +\dots\right)\\
&\leq&\beta\frac{1+\eta}{1-\eta}\eta^k
\end{eqnarray*}
Similarly, it can be shown that 
\[
\|x^{j+1}-x^{2k+1}\|\leq \beta\frac{\eta^{k+1}}{1-\eta}
\]
So the sequence is a Cauchy sequence and converges to some $\xhat\in\Ebb$.   
The fixed point of the sequence must belong to  $M\cap C$ and satisfies
\[
\|\xhat-x^0\|\leq \beta\frac{1+\eta}{1-\eta}.
\] 
Moreover,  for all $j=0,1,2,\dots$
\[
\|\xhat-x^j\|\leq \beta\eta^{j/2}\frac{1+\eta}{1-\eta}.
\]
We conclude that convergence is R-linear with rate $\sqrt{\eta}$ as claimed.

The proof for the case where $M$ is also prox-regular at $\xbar$ proceeds 
analogously using inequalities \eqref{e:dining_prox}-\eqref{e:table_prox}
of Lemma \ref{t:Elephant} instead.  
\end{proof}

Note that the worse the approximation to the 
projection, the slower the convergence.  As we showed in the previous section, 
the projection onto the unfattened set can be easier (sometimes {\em much} easier) 
to compute than the projection onto the fattened set,  so although the rate of convergence suffers
from taking only an approximate projection, 
we gain in the per-iteration complexity of calculating the projections.  

\section{Approximate alternating projections onto fattened sets}
\label{s:regularized feasibility}
% We now apply Algorithm \ref{alg:inexact ap} to finding the intersection
% of a prox-regular set $C$ and a fattened set $M_\epsilon$ of the form \eqref{e:fatM}.  
% Motivated by the observation in section \ref{s:problem} that the projection 
% onto the unregularized set $M_0$ can be easier to compute than the 
% projection onto $M_\epsilon$, we use $P_{M_0}$ to approximate 
% $P_{M_\epsilon}$.  
% 
% We examine when \eqref{e:pooping} holds if the odd iterates $x^{2k+1}$ 
% are chosen to lie
% on the line segment between the point $x^{2k}$ and the projected point $P_{M_0}(x^{2k})$.  
% Given an $\epsilon$ for which the conditions of Theorem 
% \ref{t:approx proj} hold, for starting points close enough to a point 
% of strong regularity $\xbar$, the iterates of Algorithm \ref{alg:inexact ap} 
% will converge at an R-linear
% rate governed by the angle of intersection $C\cap M_\epsilon$ at $\xbar$ and by the 
% accuracy of the approximate projection.  In particular, we will prove
\begin{thm}\label{t:implementation}
Let  $\Ebb$ and $\Ybb$ be Euclidean spaces, and $\map{\phi}{\Ybb}{\extre}$ be lsc, 
strictly convex and differentiable on $\intr(\dom \phi)$.  
Let $C\subset\Ebb$ be closed and $M_\epsilon \cap C\neq \emptyset$ for all $\epsilon\geq 0$
where $M_\epsilon$ is defined by 
\[
M_\epsilon\equiv \set{x\in\Ebb}{f\equiv d_\phi(g(x),b)\leq \epsilon}   
\]
for $d_\phi$ a Bregman distance to $b\in \Ybb$
and  $\map{g}{\Ebb}{\Ybb}$ continuous  with $\range g\subset \dom\phi$ and
\[
   \liminf_{|x|\to \infty}\frac{d_\phi(g(x),b)}{|x|}>0.   
\]
Suppose that there is a $\thetabar_0>0$ and, for all $\epsilon>0$ small enough, 
a point $\xbar_0 \in M_0 \cap C$ with nearby points $\xbar_\epsilon$  at which the 
intersection $M_\epsilon \cap C$ is strongly regular with 
angle $\thetabar_\epsilon\geq \thetabar_0>0$.   
Suppose further that $C$ and $M_\epsilon$  
are prox-regular on a neighborhood of $\xbar_0$ and that $M_\epsilon$
has nonzero proximal normals at all boundary points within this neighborhood.  
Define
$\cbar_0\equiv\cos(\thetabar_0)<1$ and   
fix the constants $c \in (\cbar_0,1)$  
and $\gamma < \sqrt{1-c^2}$. 
Compute the sequence $\{x^{k}\}$ via  
Algorithm \ref{alg:inexact ap} with the odd iterates generated by
\begin{equation}\label{e:odd iterates}
   x^{2k+1}=(1-\lambda_k)x^{2k} +\lambda_k x^{2k+1}_0 
\end{equation}
for $x^{2k+1}_0\in P_{M_0}(x^{2k})$ and $\lambda_k>0$.
%Define  
% \[
% x^{2k+1}_\epsilon=(1-\tau_\epsilon^{2k+1}) x^{2k}+\tau_\epsilon^{2k+1} x^{2k+1}_0
% \]
% where $\tau_\epsilon^{2k+1} = \min\{\tau>0~|~ (1-\tau) x^{2k}+\tau x^{2k+1}_0\in M_\epsilon \}$.
For $x^0$ and $x^1$ close enough to $\xbar_0$, there exist $\{\lambda_k\}>0$ 
and $\epsilon>0$ such that for all $k\in \Nbb$ the iterates satisfy \eqref{e:pooping_a}, 
and \eqref{e:pooping_c} with $\{x^{2k+1}\}\in M_\epsilon$, and  
the sequence of points converges to a point in $M_\epsilon \cap C$ with at least R-linear rate
\[
c\sqrt{1-\gamma^2} + \gamma \sqrt{1-c^2}  ~<~ 1.
\]
\end{thm}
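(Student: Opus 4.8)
The plan is to reduce the statement to Theorem \ref{t:approx proj} by verifying that, for all sufficiently small $\epsilon>0$ and a suitable choice of the extrapolation parameters $\{\lambda_k\}$, the sequence generated by \eqref{e:odd iterates} is a legitimate realization of Algorithm \ref{alg:inexact ap} with $M=M_\epsilon$. Since $M_\epsilon$ and $C$ are assumed prox-regular near $\xbar_0$ and their intersection is strongly regular with angle $\thetabar_\epsilon\geq\thetabar_0$, we have $\cbar_\epsilon=\cos(\thetabar_\epsilon)\leq\cbar_0$, so the fixed constants satisfy $c\in(\cbar_\epsilon,1)$ and $\gamma<\sqrt{1-c^2}$ uniformly in $\epsilon$. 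The coercivity hypothesis $\liminf_{|x|\to\infty}d_\phi(g(x),b)/|x|>0$ guarantees that each $M_\epsilon$ is bounded, hence that $P_{M_0}(x^{2k})$ and the crossing points defining $x_*^{2k+1}$ exist, and by prox-regularity they are single-valued on a neighborhood of $\xbar_0$. Granting the algorithmic conditions below, Theorem \ref{t:approx proj} in the case where $M$ is prox-regular (which applies since $M_\epsilon$ is prox-regular) delivers convergence to a point of $M_\epsilon\cap C$ at the claimed R-linear rate $c\sqrt{1-\gamma^2}+\gamma\sqrt{1-c^2}$.

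First I would dispose of the two routine requirements, \eqref{e:pooping_a} and the membership $x^{2k+1}\in M_\epsilon$. The normalized step direction produced by \eqref{e:odd iterates} is $\zhat^k\propto x^{2k}-x^{2k+1}=\lambda_k\,(x^{2k}-x^{2k+1}_0)$, so $\zhat^k$ points along $x^{2k}-P_{M_0}(x^{2k})$ independently of $\lambda_k$; by \eqref{e:PC to NC} this is a proximal normal to $M_0$ at $x^{2k+1}_0$. The parameter $\lambda_k$ therefore controls only the step length and the membership. Choosing $\lambda_k$ in $[\lambda_{\min}^k,1]$, where $\lambda_{\min}^k$ is the fraction along the segment at which it first meets $M_\epsilon$, forces $x^{2k+1}\in M_\epsilon$; at the upper end $\|x^{2k+1}-x^{2k}\|=\lambda_k\,d(x^{2k},M_0)$, which can be kept below the previous step length $\|x^{2k}-x^{2k-1}\|$ once the iterates are contracting. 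An induction identical in form to Lemma \ref{t:Orton} then propagates the smallness of $\|x^{2k+1}-\xbar_0\|$ and $\|x^{2k+1}-x^{2k}\|$ needed to remain inside the $\epsilon$-ball on which Proposition \ref{t:bones} applies.

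The heart of the argument is the approximate-normality condition \eqref{e:pooping_c}, $d_{N_{M_\epsilon}(x_*^{2k+1})}(\zhat^k)\leq\gamma$, where $x_*^{2k+1}$ is the point at which the ray from $x^{2k}$ toward $x^{2k+1}_0$ meets $\partial M_\epsilon$. As $\epsilon\downarrow 0$ the boundary of $M_\epsilon$ collapses onto $M_0$, so $x_*^{2k+1}\to x^{2k+1}_0=P_{M_0}(x^{2k})$, while the nonvanishing-proximal-normal hypothesis keeps $N_{M_\epsilon}(x_*^{2k+1})$ a genuine ray. Both this ray and the direction $\zhat^k$ limit into the proximal normal cone $N^P_{M_0}(x^{2k+1}_0)$, so the residual $d_{N_{M_\epsilon}(x_*^{2k+1})}(\zhat^k)$ measures the rotation of the $M_\epsilon$-normal away from $\zhat^k$, which flattens out as $\partial M_\epsilon$ approaches $M_0$. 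Invoking the continuity of the normal-cone mapping in the sense of Definition \ref{d:set continuity}, this residual is driven below the prescribed $\gamma$ by taking $\epsilon$ small.

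The main obstacle is making this last limit \emph{uniform}: a single $\epsilon$ must enforce \eqref{e:pooping_c} simultaneously for every iterate. I would obtain this from a neighborhood version of the normal-cone convergence, uniform over base points in $C\cap(\xbar_0+\epsilon\Ball)$ and over the admissible step directions, using prox-regularity of $M_\epsilon$ together with the uniform lower bound on proximal-normal length supplied by the nonvanishing-proximal-normal hypothesis. This uniform estimate must be interleaved with the contraction induction of the preceding paragraph, since the estimate is available only while the iterates remain in the $\epsilon$-ball, and conversely it is the contraction factor $\eta=c\sqrt{1-\gamma^2}+\gamma\sqrt{1-c^2}<1$ that keeps them there; closing this loop exactly as in Lemmas \ref{t:Elephant} and \ref{t:Orton} completes the proof.
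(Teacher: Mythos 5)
Your proposal follows essentially the same route as the paper's proof: reduce to Theorem \ref{t:approx proj} in the prox-regular case, dispose of \eqref{e:pooping_a} and of membership $x^{2k+1}\in M_\epsilon$ through the choice of $\lambda_k$ (the paper simply takes $\lambda_k=1$, which lies at the upper end of your admissible range and makes the projection inequality immediate since then $x^{2k-1}\in M_0$), fix $c$ and $\gamma$ uniformly in $\epsilon$ via $\cbar_\epsilon\le\cbar_0$, and enforce \eqref{e:pooping_c} for a single small $\epsilon$ by a normal-cone approximation that is uniform over base points near the intersection. The ``neighborhood version of the normal-cone convergence'' you say you would prove is precisely the paper's Proposition \ref{t:uniform approximation}, which it establishes from prox-regularity and nonzero proximal normals together with level-boundedness, continuity of the level-set mapping, and graphical convergence of the projections (Lemma \ref{t:level-bounded}, Theorem \ref{t:continuity}, Proposition \ref{t:projection convergence}).
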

The odd iterates of the proposed algorithm do not necessarily lie on the surface of the regularized set 
$M_\epsilon$, but could be on the interior of this set.  Were we computing true projections, all the 
odd iterates would lie on the boundary of $M_\epsilon$ -- instead we take {\em larger} steps than the 
projections would indicate.  In this sense, the algorithm defined in 
Theorem \ref{t:implementation} is a regularized approximate alternating projection with 
{\em extrapolation}.   The theorem does not tell us what such extrapolation buys us, but at least it says
that we will not do any worse than without it.  

We begin next developing the groundwork for the proof of Theorem \ref{t:implementation}.
\begin{lemma}[level-boundedness]
\label{t:level-bounded}
Let  $\Ebb$ and $\Ybb$ be Euclidean spaces, and $\map{\phi}{\Ybb}{\extre}$ be lsc, 
strictly convex and differentiable on $\intr(\dom \phi)$.  
Define the function $f\equiv d_\phi(g(\cdot),b)$ where $d_\phi(y,b)$ is 
the Bregman distance of $y$ to the point $b\in \dom \phi$ and 
the function $\map{g}{\Ebb}{\Ybb}$ is continuous  with $\range g\subset \dom\phi$ and
satisfies
\begin{equation}\label{e:level coercive}
   \liminf_{|x|\to \infty}\frac{d_\phi(g(x),b)}{|x|}>0.
\end{equation}
Then the lower level sets of $f$, $\set{x\in\Ebb}{f(x)\leq \alpha}$ for fixed $\alpha\in \Rbb$, are 
compact.  In particular, the set $\argmin f$ is nonempty and compact and $\inf f=\min f\geq 0$. 
\end{lemma}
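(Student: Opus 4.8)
The plan is to show that every sublevel set $L_\alpha\equiv\set{x\in\Ebb}{f(x)\le\alpha}$ is closed and bounded; since $\Ebb$ is finite dimensional this yields compactness, and the remaining assertions about $\argmin f$ then follow from the Weierstrass extreme value theorem. First I would check that $f$ is real-valued and proper: because $\range g\subset\dom\phi$ and $b\in\dom\phi$, every term of $d_\phi(g(x),b)=\phi(g(x))-\phi(b)-\phi'(b)(g(x)-b)$ is finite, so $\dom f=\Ebb$. Strict convexity of $\phi$ (or merely convexity, via the gradient inequality $\phi(z)\ge\phi(b)+\phi'(b)(z-b)$) makes the Bregman distance nonnegative, so $f\ge 0$ and hence $\inf f\ge 0$.

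For closedness I would argue that $f$ is lower semicontinuous. The map $z\mapsto d_\phi(z,b)$ is the sum of the lsc function $\phi$ and a continuous affine function, hence lsc; composing an lsc function with the continuous map $g$ preserves lower semicontinuity, so $f=d_\phi(g(\cdot),b)$ is lsc and every sublevel set $L_\alpha$ is closed.

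For boundedness I would invoke the coercivity hypothesis \eqref{e:level coercive}. Writing $L\equiv\liminf_{|x|\to\infty}f(x)/|x|>0$, the definition of the liminf produces a radius $R>0$ with $f(x)\ge\tfrac{L}{2}|x|$ whenever $|x|\ge R$. Consequently any $x\in L_\alpha$ with $|x|\ge R$ satisfies $\tfrac{L}{2}|x|\le\alpha$, i.e.\ $|x|\le 2\alpha/L$, so $L_\alpha\subset\set{x}{|x|\le\max(R,2\alpha/L)}$ is bounded. Combined with closedness, each $L_\alpha$ is compact.

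Finally, fixing any $x_0\in\Ebb$ and setting $\alpha_0=f(x_0)$, the set $L_{\alpha_0}$ is nonempty and compact, so the lsc function $f$ attains its minimum there; this is the global minimum since $f>\alpha_0\ge\min_{L_{\alpha_0}}f$ off $L_{\alpha_0}$. Thus $\argmin f\neq\emptyset$, and as $\argmin f=L_{\min f}$ is itself a sublevel set it is compact, with $\min f=\inf f\ge 0$. The hard part is less the analysis than the bookkeeping at two technical points: confirming that $f$ is finite-valued given only $b\in\dom\phi$ (implicitly $b\in\intr(\dom\phi)$, so that $\phi'(b)$ exists and $d_\phi(\cdot,b)$ is well defined), and extracting the uniform bound $f(x)\ge\tfrac{L}{2}|x|$ from the liminf in \eqref{e:level coercive}. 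Alternatively, the entire last paragraph can be replaced by the observation that \eqref{e:level coercive} makes $f$ level-coercive, hence level-bounded, and that a proper, lsc, level-bounded function attains its infimum on a compact solution set \cite{VA}.
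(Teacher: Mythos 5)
Your proof is correct and follows essentially the same route as the paper's: nonnegativity and properness from convexity of $\phi$, lower semicontinuity of $f$ as an lsc-plus-affine composition with the continuous $g$ (closed level sets), and coercivity forcing bounded level sets, hence compactness and a nonempty compact $\argmin f$. The only difference is that you unpack by hand (the explicit $\liminf$ bound and the Weierstrass argument) what the paper handles by citing \cite[Theorem 1.6]{VA} and \cite[Corollary 3.27]{VA}, an alternative you yourself note at the end.
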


\begin{proof}
For easy reference we recall the definition of the Bregman distance:
\[
f(x)\equiv d_\phi(g(x),b) = \phi(g(x))-\phi(b) - \ip{\phi'(b)}{g(x)-b}.
\]
Since  $\range g\subset \dom\phi$ and $b\in \dom \phi$ there is an $x\in\Ebb$ at which 
$f(x)<\infty$.  Moreover, since $\phi$ is convex, the Bregman distance is bounded
below by $0$, hence $\inf f\geq 0$ and $f$ is {\em proper} 
(that is, not everywhere equal to infinity, and does not take the value $-\infty$ on $\Ebb$).     
Also $f$ is lsc as the composition of the 
sum of a lsc function $\phi$ and a linear function $\ip{\phi'(b)}{\cdot}$ with a continuous 
function $g$.   The lower level sets of $f$ are therefore closed (see for instance \cite[Theorem 1.6]{VA}).
The coercivity condition \eqref{e:level coercive} then implies that the lower level-sets are {\em bounded}
\cite[Corollary 3.27]{VA}, thus the lower level sets are compact and $\argmin f$ is nonempty and compact.
\end{proof}

\begin{thm}[continuity of the level set mapping]
\label{t:continuity}
Let $f\equiv d_\phi(g(\cdot),b)$ with $\phi$, $g$, $b$ and $d_\phi$ 
as in Lemma \ref{t:level-bounded}.  The corresponding 
level-set mapping
\begin{equation}\label{e:fatM2}
 M(\alpha)\equiv \set{x\in\Ebb}{f(x)\leq \alpha} 
\end{equation}
is continuous on 
 $[\epsilonbar,\infty)$ where 
$\epsilonbar\equiv\min f$.  
\end{thm}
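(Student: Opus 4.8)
The plan is to check the two inclusions of Definition~\ref{d:set continuity} for $M$ at an arbitrary $\alpha\in D\equiv[\epsilonbar,\infty)$, relative to $D$. Lemma~\ref{t:level-bounded} supplies everything needed about $f$: it is proper, lsc and level-bounded, so each $M(\alpha)$ with $\alpha\geq\epsilonbar$ is nonempty and compact, which is what makes the assertion meaningful on all of $D$. Writing $\psi\equiv d_\phi(\cdot,b)$ we have $f=\psi\circ g$ and $M(\alpha)=g^{-1}\!\left(\set{y\in\Ybb}{\psi(y)\leq\alpha}\right)$; since $\phi$ is strictly convex and differentiable on $\intr(\dom\phi)$, the Bregman distance $\psi$ is convex, nonnegative, and vanishes only at $b$. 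This convexity in the image space is the structural fact I intend to exploit.

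Outer semicontinuity (the second inclusion) is immediate from lower semicontinuity: if $\alpha^k\attains{D}\alpha$ and $x^k\to x$ with $f(x^k)\leq\alpha^k$, then $f(x)\leq\liminf_k f(x^k)\leq\lim_k\alpha^k=\alpha$, so $x\in M(\alpha)$, i.e. $\gph M$ is closed. For inner semicontinuity (the first inclusion) I fix $x\in M(\alpha)$ and an arbitrary $\alpha^k\attains{D}\alpha$ and build $x^k\in M(\alpha^k)$ with $x^k\to x$. Most indices are trivial: whenever $\alpha^k\geq f(x)$ (in particular whenever $f(x)<\alpha$ and $k$ is large, or whenever $\alpha^k\geq\alpha$) I set $x^k=x\in M(\alpha)\subseteq M(\alpha^k)$. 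Because $\alpha^k\geq\epsilonbar$ on $D$, the left endpoint $\alpha=\epsilonbar$ is handled entirely this way, as $M(\epsilonbar)=\argmin f\subseteq M(\alpha^k)$. Hence the only indices needing genuine work are those with $\alpha>\epsilonbar$, $f(x)=\alpha$, and $\alpha^k<\alpha$: the approximation of a boundary point from below.

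The crux is thus the claim that, for $\alpha>\epsilonbar$, one has $M(\alpha)\subseteq\cl\set{x}{f(x)<\alpha}$. Granting it, inner semicontinuity follows cleanly: $f$ is continuous at every point that $g$ sends into $\intr(\dom\phi)$ (a convex function being continuous on the interior of its domain), so, fixing a nearby $z$ with $f(z)<\alpha$, for all $k$ large enough that $\alpha^k\in[f(z),\alpha)$ the intermediate value theorem on the segment $[x,z]$ yields $x^k$ with $f(x^k)=\alpha^k$; as $\alpha^k\uparrow\alpha=f(x)$ the crossing point satisfies $x^k\to x$, and Definition~\ref{d:set continuity} only asks for such $x^k$ beyond some index $K$, so the tail suffices. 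To prove the closure identity I would argue in the image space, where $\psi$ is convex: for any level $\alpha>\inf\psi$ one has $\set{y}{\psi(y)\leq\alpha}=\cl\set{y}{\psi(y)<\alpha}$, since from any $y$ with $\psi(y)=\alpha$ the segment toward the minimizer $b$ strictly lowers $\psi$ below $\alpha$. I expect the real obstacle to be transporting this descent back through the merely continuous map $g$: a direction lowering $\psi$ at $g(x)$ need not be realized by any small displacement of $x$, so a priori $x$ could lie in a flat region of $f=\psi\circ g$ and fail to be approachable from below. Excluding this is precisely where the qualitative regularity available in the intended application — prox-regularity of the regularized set and the presence of nonzero proximal normals at its boundary points, as invoked in Theorem~\ref{t:implementation} — must enter, forcing the boundary of $M(\alpha)$ to be genuinely one-sided rather than thick. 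This pull-back, rather than the semicontinuity bookkeeping, is where I expect the difficulty to concentrate.
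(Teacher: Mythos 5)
Your handling of outer semicontinuity is fine (lower semicontinuity of $f$ is exactly what the paper uses, packaged as closedness of the graph), and your reduction of inner semicontinuity to the closure identity $M(\alpha)\subseteq\cl\set{x\in\Ebb}{f(x)<\alpha}$ for $\alpha>\epsilonbar$ is the right bookkeeping. The gap is that this identity -- the only substantive step -- is never proved: you establish the analogous fact for the convex function $d_\phi(\cdot,b)$ in the image space and then explicitly defer the pull-back through the merely continuous map $g$, hoping that prox-regularity and nonzero proximal normals (hypotheses of Theorem \ref{t:implementation}, not of this theorem) would rescue it. As written, your argument therefore does not prove the statement. Note also that those extra hypotheses would not suffice anyway: an isolated point of $M(\alpha)$ is convex, hence prox-regular with nonzero proximal normals, and yet is precisely the kind of point that cannot be approached from lower level sets.

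What you should know, however, is that the paper's own proof stalls at exactly the same place. After obtaining outer semicontinuity from closedness of the graph, it asserts -- with no argument -- that $M^{-1}(x)=\set{\alpha\in\Rbb}{\alpha\geq f(x)}$ maps open sets to sets open relative to $[\epsilonbar,\infty)$, and then cites \cite[Theorem 5.7]{VA}. That citation only converts openness of inverse images into inner semicontinuity; the openness assertion itself is equivalent to your closure identity and is nowhere established. In fact it cannot be established under the stated hypotheses: take $\Ebb=\Ybb=\Rbb$, $\phi=\tfrac12|\cdot|^2$, $b=0$, and $g(x)=\min\{(x-1)^2+1,\,(x+1)^2+2\}$. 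All assumptions of Lemma \ref{t:level-bounded} hold and $f=\tfrac12 g^2$, $\epsilonbar=\tfrac12$, but $f$ has a non-global local minimizer at $x=-1$ with $f(-1)=2$: on a neighborhood $U$ of $-1$ we have $f\geq 2$, so $M(\alpha)\cap U=\emptyset$ for every $\alpha<2$ while $-1\in M(2)$. Hence $M$ fails to be inner semicontinuous at $\alpha=2$ (and $M^{-1}(U)=[2,\infty)$ is not relatively open in $[\tfrac12,\infty)$), so the theorem as stated is false; it needs an additional hypothesis amounting to ``$f$ has no non-global local minima,'' which is exactly the closure identity you isolated. So your diagnosis of where the difficulty concentrates is correct and exposes a genuine defect in the paper's own argument, not merely a gap in yours -- but neither your proposal nor the paper closes it.
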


\begin{proof}
By Lemma \ref{t:level-bounded} 
$M(\cdot)$ is compact and $\dom M(\cdot) = [\epsilonbar,\infty)\subset[0,\infty)$.   
Consequently the graph of $M(\cdot)$ is closed 
(in fact, closed-valued) in $\Ebb\times\Rbb$ and satisfies
\begin{equation}\label{e:osc}
   \set{y}{\exists~\alpha^k\to \alphabar,~\exists~ y^k\to y~\mbox{ with }y^k\in M(\alpha^k)}
\subset M(\alphabar)\quad\mbox{ for all } \alphabar\in \Rbb.
\end{equation}
On the other hand, 
the inverse of the level-set mapping (the epigraphical profile mapping)
\[
M^{-1}(x)\equiv\set{\alpha\in\Rbb}{\alpha\geq f(x)}   
\]
maps open sets to open sets relative to $[\epsilonbar,\infty)$, that is 
$M^{-1}(O)$ is open relative to $[\epsilonbar,\infty)$ for every open 
set $O\subset\Ebb$.  Thus by \cite[Theorem 5.7]{VA}
% since $M(\beta)\subset M(\alpha)$ for all $\alpha\geq \beta\geq\epsilonbar$, 
the level set mapping satisfies 
\begin{eqnarray}\label{e:isc}
&&\!\!\!\!\!\!\!\!\!M(\alphabar)\subset\\
&& \!\!\!  \set{y}{\forall~\alpha^k\attains{{[\epsilonbar,\infty)}} \alphabar, ~\exists~ K>0 \mbox{ such that for }
k>K,~ y^k\to y \mbox{ with } y^k\in M(\alpha^k) }
\nonumber
\end{eqnarray}
for all $\alphabar\geq \epsilonbar$.
%  where $\alpha^k\attains{M} \alphabar$ denotes that the sequence 
% $\{\alpha^k\}\subset \dom M(\cdot)$.
Since the right hand side of \eqref{e:isc} is a subset of the left hand side of \eqref{e:osc} 
we have equality  of these limiting procedures, and thus continuity of $M(\cdot)$ on 
$[\epsilonbar,\infty)$ according to Definition \ref{d:set continuity}.  
\end{proof}

\begin{propn}\label{t:projection convergence}
Let $f\equiv d_\phi(g(\cdot),b)$ with $\phi$, $g$, $b$ and $d_\phi$ 
be as in Lemma \ref{t:level-bounded} and let $M(\alpha)$ be 
defined by \eqref{e:fatM2}.
For $\{\alpha^k\}\subset [\epsilonbar,\infty)$  with 
$\alpha^k\to \alphabar$ where 
$\epsilonbar\equiv\min f$,  
the corresponding sequence of projections onto $M(\alpha^k)$,
 $P_{M(\alpha^k)}$, converges graphically to $P_{M(\alphabar)}$,
that is
\[
\gph P_{M(\alpha^k)}\to \gph P_{M(\alphabar)}.
\]
\end{propn}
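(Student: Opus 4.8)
The plan is to prove the asserted graphical convergence by verifying the two set-inclusions that together constitute Painlev\'e--Kuratowski convergence of the graphs (see \cite{VA}): that the outer limit $\limsup_k \gph P_{M(\alpha^k)}$ is contained in $\gph P_{M(\alphabar)}$, and that $\gph P_{M(\alphabar)}$ is contained in the inner limit $\liminf_k \gph P_{M(\alpha^k)}$. Two structural facts will be used throughout. First, by Theorem \ref{t:continuity} the level-set mapping $M(\cdot)$ is continuous at $\alphabar$, so $M(\alpha^k)\to M(\alphabar)$ in the sense of Definition \ref{d:set continuity}; this supplies both the outer inclusion $\{\,y:\exists\,y^k\in M(\alpha^k),\ y^k\to y\,\}\subset M(\alphabar)$ and, for every $w\in M(\alphabar)$, an approximating sequence $w^k\in M(\alpha^k)$ with $w^k\to w$. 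Second, since the level sets are nested and $A\equiv\sup_k\alpha^k<\infty$, Lemma \ref{t:level-bounded} places all the $M(\alpha^k)$ inside the fixed compact set $M(A)$; hence every projection is nonempty and all sequences produced below are bounded, so cluster-point arguments apply.

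The outer inclusion is routine. Suppose $(x^k,z^k)\to(\xbar,\zbar)$ with $z^k\in P_{M(\alpha^k)}(x^k)$. Since $z^k\in M(\alpha^k)$ and $z^k\to\zbar$, outer semicontinuity gives $\zbar\in M(\alphabar)$. For arbitrary $w\in M(\alphabar)$, inner semicontinuity furnishes $w^k\in M(\alpha^k)$ with $w^k\to w$, and the projection inequality $\|z^k-x^k\|\le\|w^k-x^k\|$ passes to the limit to yield $\|\zbar-\xbar\|\le\|w-\xbar\|$. As $w$ was arbitrary and $\zbar\in M(\alphabar)$, we conclude $\zbar\in P_{M(\alphabar)}(\xbar)$.

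The inner inclusion is the main obstacle. Here I must produce, for a prescribed $\zbar\in P_{M(\alphabar)}(\xbar)$, sequences $x^k\to\xbar$ and $z^k\in P_{M(\alpha^k)}(x^k)$ with $z^k\to\zbar$. The difficulty is that projections onto the nonconvex sets $M(\alpha^k)$ are multivalued and fail to be inner semicontinuous, so the naive choice $x^k\equiv\xbar$ may force $z^k$ to a \emph{different} nearest point than $\zbar$ whenever $\xbar$ has several projections. The remedy will be to perturb the base point toward $\zbar$. Assume $\xbar\neq\zbar$ (the case $\xbar=\zbar$ is handled by $x^k=z^k=\zeta^k$ for any $\zeta^k\in M(\alpha^k)$ with $\zeta^k\to\zbar$). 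For $s\in[0,1)$ set $y_s\equiv(1-s)\zbar+s\xbar$. Expanding $\|y_s-w\|^2$ and using the nearest-point inequality $\ip{\xbar-\zbar}{w-\zbar}\le\tfrac12\|w-\zbar\|^2$, valid for every $w\in M(\alphabar)$, gives
\[
\|y_s-w\|^2\ \ge\ \|y_s-\zbar\|^2+(1-s)\|w-\zbar\|^2,
\]
so for each fixed $s\in[0,1)$ the point $\zbar$ is the \emph{unique} nearest point of $M(\alphabar)$ to $y_s$, i.e. $P_{M(\alphabar)}(y_s)=\{\zbar\}$.

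With this uniqueness in hand, I fix $s\in(0,1)$, take $\zeta^k\in M(\alpha^k)$ with $\zeta^k\to\zbar$ (inner semicontinuity), and let $z^k\in P_{M(\alpha^k)}(y_s)$. From $\|z^k-y_s\|\le\|\zeta^k-y_s\|\to\|\zbar-y_s\|$ together with outer semicontinuity of $M(\cdot)$ and the single-valuedness just established, every cluster point of $(z^k)$ equals $\zbar$; since the $z^k$ lie in the compact set $M(A)$, in fact $\sup\{\|z-\zbar\|:z\in P_{M(\alpha^k)}(y_s)\}\to0$ as $k\to\infty$. Finally I diagonalize: choosing $s_j\uparrow1$ (so $y_{s_j}\to\xbar$) and a strictly increasing sequence $k_j$ for which this supremum is below $1/j$ whenever $k\ge k_j$ and $s=s_j$, I set $x^k\equiv y_{s_j}$ and pick $z^k\in P_{M(\alpha^k)}(x^k)$ with $\|z^k-\zbar\|<1/j$ for $k_j\le k<k_{j+1}$. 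Then $x^k\to\xbar$ and $z^k\to\zbar$ with $z^k\in P_{M(\alpha^k)}(x^k)$, which establishes the inner inclusion and hence $\gph P_{M(\alpha^k)}\to\gph P_{M(\alphabar)}$.
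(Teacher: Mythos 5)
Your proof is correct, and it reaches the conclusion by a genuinely different route than the paper. The paper's proof is a one-liner: having established $M(\alpha^k)\to M(\alphabar)$ via Theorem \ref{t:continuity}, it simply invokes the Rockafellar--Wets result that Painlev\'e--Kuratowski convergence of nonempty closed sets yields graphical convergence of the associated projection mappings (cited as a ``minor extension'' of \cite[Proposition 4.9]{VA}, see \cite[Example 5.35]{VA}). You instead prove that implication from scratch on top of the same input from Theorem \ref{t:continuity}: the outer inclusion by passing the projection inequality $\|z^k-x^k\|\le\|w^k-x^k\|$ to the limit against inner-semicontinuous approximants $w^k\to w$, and the inner inclusion by the segment trick --- establishing $P_{M(\alphabar)}(y_s)=\{\zbar\}$ for $y_s$ strictly between $\zbar$ and $\xbar$ via the expansion $\|y_s-w\|^2\ge\|y_s-\zbar\|^2+(1-s)\|w-\zbar\|^2$, and then diagonalizing $s_j\uparrow 1$ against $k$. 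This essentially reconstructs, in the present setting, the textbook proof the paper cites; what it buys is self-containedness, making explicit exactly where nonemptiness and compactness enter (the level-boundedness of Lemma \ref{t:level-bounded}, which confines all $M(\alpha^k)$ to the compact set $M(A)$, $A=\sup_k\alpha^k$), details the paper leaves hidden behind the citation. Two cosmetic points only: the graphical outer limit is defined through subsequences, so your outer-inclusion argument should formally be run along an arbitrary convergent subsequence $(x^k,z^k)_{k\in N}$ --- verbatim the same, since Theorem \ref{t:continuity} and Definition \ref{d:set continuity} apply to any sequence $\alpha^k\to\alphabar$ in $[\epsilonbar,\infty)$; and for indices below your first threshold $k_1$ one takes any point of the (nonempty) graph, which is harmless for the limit.
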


\begin{proof}
Since $M(\alpha^k)\to M(\alphabar)$ by Theorem \ref{t:continuity}, graphical
convergence of the projection mapping follows from a minor 
extension of \cite[Proposition 4.9]{VA} (see \cite[Example 5.35]{VA}). 
\end{proof}

In light of the discussion in section \ref{s:problem}, our numerical strategy for approximating
the projection to the regularized set $M_\epsilon$ defined by \eqref{e:fatM} will be to 
compute the intersection of the boundary of $M_\epsilon$ with line segment between the
current iterate and the projection onto the unregularized set $M_0$.  Specifically, 
for $x\notin M_\epsilon$ we   
define $x_0=P_{M_0}(x)$ and  calculate the point 
\begin{equation}\label{e:approximate projection}
x_\epsilon\equiv (1-\tau_\epsilon) x+\tau_\epsilon x_0   
\quad\mbox{ where } \quad\tau_\epsilon\equiv\min\{\tau>0~|~ (1-\tau) x+\tau x_0\in M_\epsilon\}.
\end{equation}
The next proposition shows that this approximation can achieve any specified
accuracy for sets with a certain regularity.
This will then be used to guarantee that the approximation to the projection given by 
\eqref{e:approximate projection}
satisfies \eqref{e:pooping_c} on neighborhoods of a fixed point of Algorithm 
\ref{alg:inexact ap}.
\begin{propn}[uniform normal cone approximation]\label{t:uniform approximation}
Let $\epsilonbar>0$ and $M_\epsilon$ $(\epsilon\in[0,\epsilonbar])$ be defined by 
\eqref{e:fatM}.  
Let $x_0\in M_0$, and $(x_0+\rho \Ball)\cap(\Ebb\setminus M_\epsilonbar)\neq \emptyset$
for $\rho>0$ fixed. 
In addition to the assumptions of  Lemma \ref{t:level-bounded}, 
suppose that $M_\epsilon$ $(\epsilon\in[0,\epsilonbar])$ 
is prox-regular at all points  
$x\in (x_0+\rho \Ball)\cap M_\epsilon$ with nonzero proximal normals 
at points $x\in \left[(x_0+\rho \Ball)\cap M_\epsilon\right]\setminus\intr(M_\epsilon)$.   
Then given any $\gamma>0$
there exists an $\epsilon'\in(0,\epsilonbar]$ such that for all 
$\epsilon\in (0,\epsilon']$ 
\begin{equation}\label{e:pooping_cp}
d_{N_{M_\epsilon}(z_\epsilon)}\left(\frac{z-z_0}{\|z-z_0\|}\right)<\gamma 
\end{equation}
holds where $z_0=P_{M_0}(z)$, $z_\epsilon$ is given by \eqref{e:approximate projection}
 and $z$ is any point near 
$(x_0+\rho \Ball)\cap M_\epsilon$.   
%If the the sets $M(\alpha^k)$ and $M(\alphabar)$ are prox-regular
%then  for all 
%$z$ is close enough to  each
%$M(\alpha^k)$ $(k=1,2,\dots)$ and $M(\alphabar)$, 
%$y^k\to \ybar$ where
%$y^k\equiv P_{M(\alpha^k)}(z)$ and $\ybar\equiv  P_{M(\alphabar)}(z)$, and 
%\[
%N^{r^k}_{M(\alpha^k)}(y^k)\to N^{\rbar}_{M(\alphabar)}(\ybar) \quad
%\mbox{ for some } r^k>0,~\rbar>0. 
%\]
%%\[
%%(I+N^r_{M(\alpha^k)})^{-1}\to (I+N^r_{M(\alphabar)})^{-1}
%%\] 
\end{propn}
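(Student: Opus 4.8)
The plan is to argue by contradiction through a compactness/limiting argument, using the graphical convergence of projectors from Proposition \ref{t:projection convergence} to transport normal-cone information from the limit set $M_0$ back to the regularized sets $M_\epsilon$. Suppose the conclusion fails. Then there is a $\gamma_0>0$ together with sequences $\epsilon_k\downarrow 0$ and points $z^k$ near $(x_0+\rho\Ball)\cap M_{\epsilon_k}$, which lie outside $M_{\epsilon_k}$ so that the approximate projection \eqref{e:approximate projection} is defined, with $d_{N_{M_{\epsilon_k}}(z^k_{\epsilon_k})}(d^k)\ge\gamma_0$, where $z^k_0=P_{M_0}(z^k)$ and $d^k=(z^k-z^k_0)/\|z^k-z^k_0\|$. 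Since everything stays in the compact ball $x_0+\rho\Ball$, I would pass to a subsequence so that $z^k\to\zbar$; using that $M_0$ is prox-regular near $x_0$, so that $P_{M_0}$ is single valued and continuous there, I get $z^k_0\to\zbar_0:=P_{M_0}(\zbar)$, and by compactness of the unit sphere $d^k\to\dbar$ with $\|\dbar\|=1$. Because each $d^k\in N_{M_0}(z^k_0)$ by \eqref{e:PC to NC} and the normal cone is a closed multifunction (the remark after Definition \ref{d:normal cone}), the limit satisfies $\dbar\in N_{M_0}(\zbar_0)$.

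Next I would pin down the limit of the approximate projections. Writing $z^k_{\epsilon_k}=(1-\tau_k)z^k+\tau_k z^k_0$ with $\tau_k\in(0,1)$ and passing to a further subsequence $\tau_k\to\bar\tau$, we have $z^k_{\epsilon_k}\to\ybar\in[\zbar,\zbar_0]$. Continuity of $f=d_\phi(g(\cdot),b)$ (from Lemma \ref{t:level-bounded}) gives $f(\ybar)=\lim_k\epsilon_k=0$, so $\ybar\in M_0$; but since $z^k_0$ is the nearest point of $M_0$ on the segment $[z^k,z^k_0]$, the only point of the limiting segment lying in $M_0$ is the endpoint, whence $\ybar=\zbar_0$. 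Thus $z^k_{\epsilon_k}\to\zbar_0$, consistently with the continuity of the level-set mapping $M(\cdot)$ established in Theorem \ref{t:continuity}.

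The heart of the argument is to manufacture nonzero proximal normals to $M_{\epsilon_k}$ \emph{at the points} $z^k_{\epsilon_k}$ that converge to $\dbar$, contradicting the lower bound $\gamma_0$. First I realize $\dbar$ as a genuine proximal normal of $M_0$: for small $t>0$ set $u:=\zbar_0+t\dbar$; prox-regularity of $M_0$ near $x_0$ makes $\zbar_0$ the nearest point of $M_0$ to $u$, so $u-\zbar_0=t\dbar\in N^P_{M_0}(\zbar_0)$. Applying the graphical convergence $\gph P_{M_{\epsilon_k}}\to\gph P_{M_0}$ of Proposition \ref{t:projection convergence} to the fixed point $u$ yields $w^k:=P_{M_{\epsilon_k}}(u)\to\zbar_0$ with $u-w^k\in N_{M_{\epsilon_k}}(w^k)$ and $u-w^k\to t\dbar$; for $\epsilon_k<f(u)$ the point $w^k$ is a boundary point, so by hypothesis the normal is nonzero. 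It remains to transfer the normal from $w^k$ to $z^k_{\epsilon_k}$: both are boundary points of $M_{\epsilon_k}$ converging to the common limit $\zbar_0$, and Proposition \ref{t:prox angle} says a prox-regular boundary is locally nearly flat, so that two normalized proximal normals at boundary points a short distance apart are nearly parallel. Pushing this comparison to the limit gives $d_{N_{M_{\epsilon_k}}(z^k_{\epsilon_k})}(\dbar)\to 0$, the desired contradiction.

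The step I expect to be the main obstacle is precisely this last transfer, because it demands a modulus of continuity for the normal-cone map $x\mapsto N_{M_\epsilon}(x)$ that is \emph{uniform} as $\epsilon\downarrow 0$. As $\epsilon$ shrinks, the boundary $\{f=\epsilon\}$ collapses onto $M_0$ and its curvature degenerates, so one must control the prox-regularity constants of $M_\epsilon$ uniformly in $\epsilon$ on the fixed neighborhood $x_0+\rho\Ball$; this is exactly the role of the hypothesis that $M_\epsilon$ is prox-regular with nonzero proximal normals at all boundary points of $(x_0+\rho\Ball)\cap M_\epsilon$ for every $\epsilon\in[0,\epsilonbar]$. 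Making the angle estimate of Proposition \ref{t:prox angle} effective with an $\epsilon$-independent $\delta$, and verifying that $w^k$ and $z^k_{\epsilon_k}$ approach $\zbar_0$ fast enough that the chord between them is genuinely short relative to that modulus, is the quantitative core on which the whole proof rests.
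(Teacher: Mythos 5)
Your skeleton---contradiction, compactness, and the graphical convergence of projectors from Proposition \ref{t:projection convergence}---shares its main tool with the paper's argument, and everything up to the final step is essentially sound: realizing $\dbar$ as a proximal normal to $M_0$ at $\zbar_0$ via $u=\zbar_0+t\dbar$ and extracting $w^k=P_{M_{\epsilon_k}}(u)\to\zbar_0$ with $u-w^k\to t\dbar$ is fine. The proof breaks at exactly the step you flag as the crux, and the break is not merely a missing quantitative estimate: the geometric principle you invoke is false. Proposition \ref{t:prox angle} bounds the inner product of a normal at $y$ with a \emph{chord} $z-y$ of the set; it does not compare normal cones at two distinct nearby points, and no such comparison holds for prox-regular sets. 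A convex set whose boundary has a corner (convex sets are prox-regular under the paper's definition) is a counterexample: for the square, the points $(t,0)$ and $(0,t)$ both converge to the corner as $t\downarrow 0$, yet their unit normals $(0,-1)$ and $(-1,0)$ remain at angle $\pi/2$, and indeed $d_{N}( \cdot )$ between them equals $1$ for every $t>0$. So ``two normalized proximal normals at boundary points a short distance apart are nearly parallel'' cannot be rescued by any uniformity of prox-regularity constants in $\epsilon$; the inclusion you need, that $N_{M_{\epsilon_k}}(w^k)\cap\Ball$ lies within $\gamma_0$ of $N_{M_{\epsilon_k}}(z^k_{\epsilon_k})$, simply need not hold however close $w^k$ and $z^k_{\epsilon_k}$ are, because the normal cone can collapse discontinuously from a fat cone to a single ray between neighboring boundary points.

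The paper's proof is organized precisely so that normal cones at two different points are never compared. It uses the hypothesis of nonzero proximal normals on the boundary together with the uniform realization radius furnished by prox-regularity (\cite[Theorem 1.3.f]{PolRockThib00}) to identify the entire normal cone of $M_\epsilon$ \emph{at the crossing point} $z_\epsilon$ itself with the cone of directions $u-z_\epsilon$ generated by points $u$ in an $r_\epsilon$-neighborhood whose projection onto $M_\epsilon$ is $z_\epsilon$; the graphical convergence $\gph P_{M_\epsilon}\to\gph P_{M_0}$ of Proposition \ref{t:projection convergence} is then applied to this identification, so that the normal-cone information is transported through the projection mapping evaluated at the relevant point rather than through a continuity property of the normal-cone map. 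It is telling that in your argument the nonzero-proximal-normal hypothesis is never actually used (it is automatic for $w^k=P_{M_{\epsilon_k}}(u)$ since $u\notin M_{\epsilon_k}$): that hypothesis is exactly what the paper needs to characterize $N_{M_{\epsilon_k}}(z^k_{\epsilon_k})$ through projections, which is the step your auxiliary-point construction circumvents and then cannot recover.
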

\begin{proof}
Since for all $\epsilon\in[0,\epsilonbar]$  the sets $M_\epsilon$ are prox-regular on  
$(x_0+\rho \Ball)\cap M_\epsilon$, 
all nonzero proximal normals to $M_\epsilon$ can be realized by an $r$-ball 
on open neighborhoods of points on $(x_0+\rho \Ball)\cap M_\epsilon$ for $r$ 
small enough \cite[Theorem 1.3.f]{PolRockThib00}.  There is thus a  ball 
with radius $r_\epsilon>0$ on which the nonzero proximal normals to $M_\epsilon$ can 
be realized uniformly on $(x_0+\rho \Ball)\cap M_\epsilon$. Also by assumption, the 
proximal normal cones to all points on the boundary of 
$(x_0+\rho \Ball)\cap M_\epsilon$ are nonzero.  Thus, by Definition \ref{d:normal cone}
the normal cone to $M_\epsilon$ at all points on the boundary of 
$(x_0+\rho \Ball)\cap M_\epsilon$ can be identified with the projection of points 
$z$ in a $r_\epsilon$-neighborhood of this boundary.  The result then follows from 
Proposition \ref{t:projection convergence}, 
identifying the level set mapping $M(\epsilon)$ with 
the parameterized set $M_\epsilon$.
   %Convergence of the projections in the prox-regular case follows from 
%\cite[Proposition 4.9]{VA} and the single-valuedness of the projections. 
%Convergence of the {\bf truncated normal cone mappings} follows from 
%the relation $P_{M(\alpha^k)} = (I+N^{r^k}_{M(\alpha^k)})^{-1}$ 
% on a neighborhood of $y^k$ 
%for some $r^k>0$ ($k=1,2,\dots,$) and similarly for $P_{M(\alphabar)}$
%\cite[Theorem 1.3]{PolRockThib00}. 
%
\end{proof}
\begin{remark}
   We conjecture that the assumption of prox-regularity and nontriviality of the proximal
normal can be relaxed.  The assumptions of Lemma 
\ref{t:level-bounded} are used to guarantee graphical convergence of the projection mappings;  the 
issue here is that the points on the boundary of the $M_\epsilon$ generated by 
\eqref{e:approximate projection} do not have to correspond to projections.  
Prox-regularity, and more restrictive still, the nontriviality of the proximal normals to $M_\epsilon$
on the boundary is used, in essence,  locally to guarantee the reverse implication of \eqref{e:PC to NC}.
Definition \ref{d:normal cone} only relies on the {\em existence} of sequences of proximal normals whose
limits constitute the normal cone.  Our approximation scheme \eqref{e:approximate projection}, 
in contrast, generates a specific
sequence of points, which could conceivably correspond only to zero proximal normals without further 
assumptions on the regularity of $M_\epsilon$,
though we are unaware of a counterexample.     
That prox-regularity alone is not enough to assure that the proximal normal cone is nonzero is 
nicely illustrated by the set $M=\set{x\in \Rtw}{x_2\geq x_1^{3/5}}$ which is prox-regular at the origin, 
but has only a zero proximal normal cone there (see \cite[Fig. 6-12.]{VA}).  
Obviously, such regularity will depend on the distance $d_\phi$ and the mapping $g$ used in the 
construction of $M_\epsilon$.    \endproof
\end{remark}

{\em Proof of Theorem \ref{t:implementation}.}
We show first that there are $\lambda_k>0$ such that for any $\epsilon\geq 0$ 
the iterates $x^{2k+1}$ lie in  $M_\epsilon$ and
satisfy \eqref{e:pooping_a}.   Consider $\lambda_k=1$ for all $k$.  Then 
$x^{2k+1}=x_0^{2k+1}\in M_\epsilon$ for all $k$ and all $\epsilon\geq 0$
and by the definition of the projection
\[
   \|x_0^{2k+1}-x^{2k}\|\leq \|x^{2k}-x_0^{2k-1}\|
\]
which suffices to prove the claim.  

For existence of $\epsilon>0$ such that \eqref{e:pooping_c} is satisfied, note that 
for all $\epsilon$ sufficiently small
$\cbar_\epsilon \equiv \cos(\thetabar_\epsilon)\leq\cos(\thetabar_0) \equiv \cbar_0<1$ so that 
the choice of $c\in (\cbar_0,1)$ satisfies $c\in(\cbar_\epsilon,1)$ and consequently a fixed   
$\gamma < \sqrt{1-c^2}$ suffices for all $\epsilon$ sufficiently small.
The result then follows immediately 
from Proposition \ref{t:uniform approximation}.  The assumptions of Theorem \ref{t:approx proj}
then apply to guarantee linear convergence, which completes the proof.
\endproof

\begin{remark}\label{r:slither}
The theorem above guarantees convergence of Algorithm \ref{alg:inexact ap} with 
approximation strategy given by \eqref{e:odd iterates} for instances where the 
intersection of the unregularized problem need not be strongly regular.  When the unregularized
problem is inconsistent the strategy may fail.    In particular, 
suppose that $M_0\cap C=\emptyset$.  Then  for some $\epsilonbar$ the intersection  
$M_\epsilon\cap C=\emptyset$ for all $\epsilon<\epsilonbar$.  If $\gamma$ is such 
that \eqref{e:pooping_c}  is only satisfied for $\epsilon<\epsilonbar$, then the 
proposed approximation will fail.   

 To the degree that the coupling between the regularization parameter $\epsilon$ and 
$\gamma$ is {\em weak}, we can still obtain positive results.  One instance 
where the coupling is very weak is if the fattened set has interior and $\xbar$ is 
some point in this interior.   In this case $\cbar =0$ in \eqref{e:cbar}, $\gamma$ can 
be arbitrarily close to $1$ and the condition  
\eqref{e:pooping_c} is almost trivial to satisfy.   This is indeed the case for our 
intended application.  Of course, the closer $\gamma$ is to $1$, that is, the 
worse our approximation of the true projection, the slower the convergence; 
so the trade off between efficient computations and rates of convergence
must be balanced.   The addition of 
extrapolation to the approximate algorithm is meant to mitigate
any adverse effects of the approximation.  The effectiveness of 
extrapolation is illustrated in the following section.  
\endproof
\end{remark}

\section{An example from diffraction imaging}
\label{s:numerics}
We present an application of the theory developed here to 
image reconstruction from laser diffraction experiments produced
at the Institute for X-Ray Physics at the University of 
G\"ottingen.  Shown in Figure \ref{f:setup} is the observed diffraction 
image produced by an object resembling a coffee cup that has been 
placed in the path of a helium-neon  laser.  The imaging model is
\begin{equation}\label{e:diffraction}
|Fx|^2 = b
\end{equation}
where $b\in \Rn$ is the observed image intensity, $F$ is a discrete
Fourier transform, $|\cdot|^2$ is the componentwise (pixelwise) modulus-squared,
and $x\in \Cbb^n$ is the object to be found.  The 
image is corrupted by noise modeled by a Poisson distribution.  In the 
context of \eqref{e:fatM} the solution we seek lies in the fattened set 
\begin{equation}\label{e:X-ray set}
M_\epsilon\equiv\set{x}{KL(|Fx|^2,  b)\leq \epsilon}
\end{equation}
for $KL(x,y)$ the Kullback-Leibler divergence given by \eqref{e:KL}.
This set can be shown to be prox-regular everywhere with nonzero 
proximal normals at all points on the boundary. 
To this, we add the qualitative constraint that the object is nonnegative
(that is, real) and lies within a specified support: for a given index set 
$\Jbb\subset\{1,2,\dots,n\}$
\[
C\equiv\set{x\in\Rn_{\!\!\!\!+}}{x_j=0 \mbox{ for }j\in\Jbb} .  
\]
This set is not only prox-regular, but in fact convex.

Despite the good features of these sets, the problem is inconsistent/ill-posed. 
The set $C$ is a set of real vectors, but the observation $b$ is corrupted
by noise.  If $b$ is not symmetric, as happens to be the case here, 
then the image cannot come from a real-valued object.   Sometimes 
practitioners will ``preprocess'' the data by symmetrizing the raw data.  If 
this is done, then the corresponding feasibility problem is provably consistent, 
and the results of Theorem \ref{t:implementation} can be applied.  
In the numerical examples below, however, we choose to keep closer to the 
true nature of the experiment and demonstrate the success of 
Algorithm \ref{alg:inexact ap} as prescribed by Theorem \ref{t:approx proj} despite
the absence of guarantees that the condition \eqref{e:pooping_c}  is satisfied.

\begin{figure}
\centering
 \includegraphics[width=0.52\linewidth]{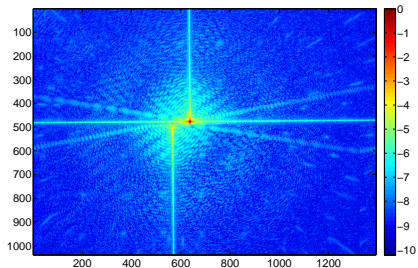}
\caption{\label{f:setup}
Diffraction image of real object. 
}
\end{figure}

The state of the art for iterative methods for solving this problem can be found in 
\cite{Marchesini07}.  The main problem for these algorithms is the absence of 
a stopping criterion.  Often what is done in practice is one algorithm (often
the Douglas Rachford algorithm or variants \cite{BCL1, BCL2, Luke05a}) is used 
to get close to a solution, and then alternating projections is used to refine the 
image according to the ``eye-ball'' norm.  In the application literature alternating 
projections is often known as the ``Error Reduction'' algorithm.  
 Different communities have different 
opinions as to what constitutes a stopping criteria, but in our reading of the application
literature, seldom do the proposed criteria involve iterates approaching a numerical fixed 
point.  Typical behavior of alternating projections onto the unregularized 
problem, together with the corresponding reconstruction are shown in Figure 
\ref{f:convention}.  The true object was a coffee cup, which can be seen, upside down, in the lower
right hand corner of the reconstruction in Figure \ref{f:convention}, with the handle on
the left hand side.   The reconstruction of the true object is only unique up to rotations, shifts and 
reflections.  This is why the reconstruction is upside down relative to the true object.
\begin{figure}
\centering
\includegraphics[width=1.0\linewidth]%
{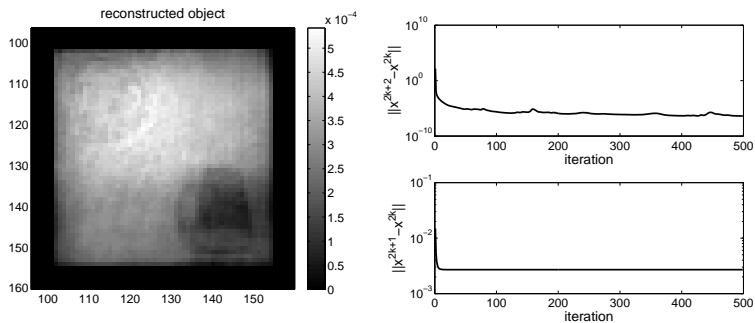}
\caption{\label{f:convention}
Reconstruction and behavior of odd and even iterates of unregularized 
(data set $M_0$ given by \eqref{e:X-ray set}) exact alternating 
projections applied to the diffraction imaging problem. Only $500$ 
iterations are shown.  The algorithm appeared to find a best approximation
pair after about $24,000$ iterations. ($x^{2k}\to x^{2k+2}$ but $\|x^{2k}- x^{2k+1}\|$ is 
bounded above zero.)  
}
\end{figure}

Next we apply Algorithm \ref{alg:inexact ap} with the approximate projection
computed as in Theorem \ref{t:implementation} for different regularization 
parameters $\epsilon$ and different step-length strategies.  Figure \ref{f:regularized}
shows the reconstruction and behavior of iterates for $\epsilon=1.9$ and $\lambda_k$
chosen so that the iterates remain on the surface of the $M_\epsilon$ set.    
Figure \ref{f:regularized_extrapolated}
shows the reconstruction and behavior of iterates for $\epsilon=1.2$ and $\lambda_k=1$
for all $k$.    
\begin{figure}
\begin{center}
\includegraphics[width=1.0\linewidth]{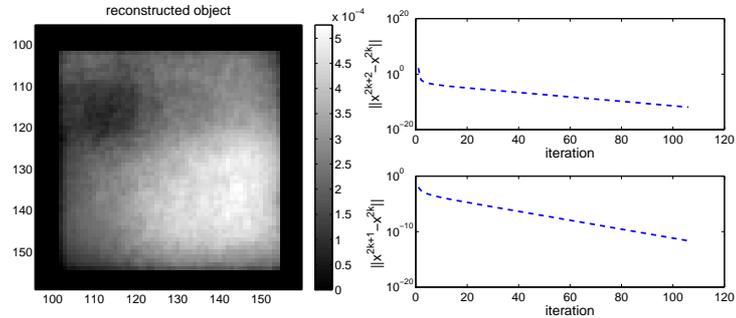}
\end{center}
\caption{\label{f:regularized}
Reconstruction and behavior of odd and even iterates of regularized 
(data set $M_\epsilon$ given by \eqref{e:X-ray set} with $\epsilon=1.9$) inexact 
alternating projections with $\lambda_k$ chosen so that the iterates lie on the 
surface of the $M_\epsilon$ set. }
\end{figure}
\begin{figure}
\begin{center}
\includegraphics[width=1.0\linewidth]{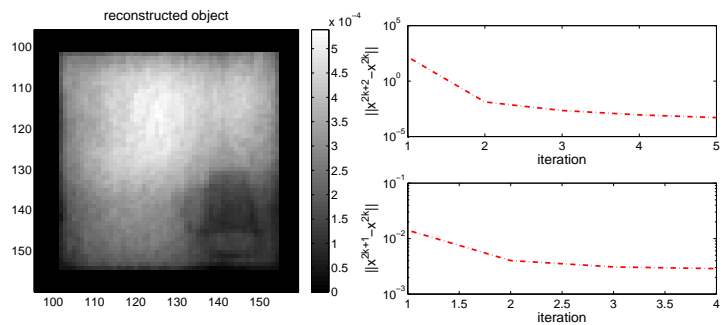}
\end{center}
\caption{\label{f:regularized_extrapolated}
Reconstruction and behavior of odd and even iterates of regularized
(data set $M_\epsilon$ given by \eqref{e:X-ray set} with $\epsilon=1.2$) inexact 
extrapolated alternating projections with $\lambda_k=1$ for all $k$. The algorithm 
terminates at the 5th iterate which achieves condition \eqref{e:pooping_b} to numerical
precision.}
\end{figure}
Figure \ref{f:comparison}
shows the {\em apparent} convergence rates for different values of the relaxation 
parameter $\epsilon$ in \eqref{e:X-ray set} and different settings for the 
step-length parameters $\lambda_k$.  The black solid line shows again the change between the 
even iterates of the unregularized, exact alternating projection algorithm.   
The  blue and green dashed lines show the apparent 
rate of convergence of the regularized problems without extrapolation, 
that is, $\lambda_k$ is computed
so that the iterates lie on the surface of the set $M_\epsilon$ (to numerical precision).  
As expected, the  lower the value of 
$\epsilon$, the poorer the (asymptotic) rate of convergence since the sets are 
closer to ill-posedness for smaller regularization values.  The red dashed-dotted line shows what can be 
gained by extrapolation.  Here the step-length parameter $\lambda_k=1$ for all $k$ and  
the algorithm proceeds with a convergence rate 
indicated by Theorem \ref{t:approx proj}, but then terminates finitely as it finds a
point on intersection interior to the regularized set $M_\epsilon$.  
Note that the only difference between this 
implementation and the unregularized exact alternating projections implementation 
(the black solid line) is early termination of the algorithm.  This is what is usually done 
heuristically in practice.  What this example shows is a mathematically sound 
explanation of this practice in terms of regularization, extrapolation and approximate
alternating projections.  

For this example it is not possible to compute an a priori rate of convergence as specified by
Theorem \ref{t:approx proj} since the set $M_\epsilon$ has no analytic form and we are 
unable to compute the angle of the intersection.  We observe a linear convergence rate, at least
to the limit of machine precision.  To a certain extent, this is beside the point.  
The value of the theory outlined above lies not with the computation of rates of convergence, but rather
with the provision of regularization strategies and corresponding stopping rules.   We can, however, 
verify numerically whether a point lies in the interior of the regularized set at the point of intersection with 
the qualitative constraint set.  For the extrapolated example shown in Figure \ref{f:regularized_extrapolated}
it was verified that this point lies in the interior of the set $M_\epsilon$ by perturbing the point slightly and 
verifying that it still lies in the set $M_\epsilon$.   Thus, even if the unregularized 
problem is not consistent as required by Theorem \ref{t:implementation} to {\em guarantee} that the 
approximate projection achieves a sufficient accuracy for linear convergence, since the fixed point of 
the algorithm is an interior point, the required accuracy for the approximate projection is quite easy to 
satisfy as discussed in Remark \ref{r:slither}.
\begin{figure}
\centering
 \includegraphics[width=1.0\linewidth]{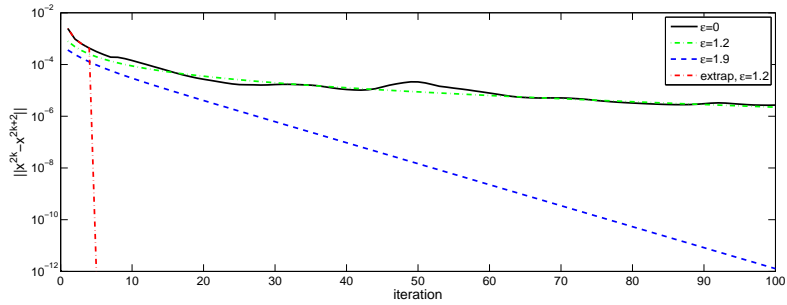}
\caption{\label{f:comparison}
Comparison of implementations of Algorithm \ref{alg:inexact ap} with 
the approximate projection computed as in Theorem \ref{t:implementation}
for different parameters $\epsilon$ and step-length strategies ($\lambda_k$) 
for the fattened set $M_\epsilon$
given by \eqref{e:X-ray set}.  The black line is the unregularized alternating 
projection algorithm with exact projections.  The blue and green lines are the 
regularized approximate alternating projection algorithms with step lengths $\lambda_k$
computed so that the iterates lie on the surface of the $M_\epsilon$ set.  The red line 
is the extrapolated approximate alternating projection algorithm with $\lambda_k=1$ 
for all $k$.  
}
\end{figure}

Finally, note that  while the rate of convergence for the more regularized problems is better, 
as indicated by comparing the reconstructions in Figures \ref{f:regularized} and 
\ref{f:regularized_extrapolated}, the reconstruction can be poorer since this 
reconstruction is apparently further
away from the ideal solution than the less regularized reconstructions.

\section{Conclusion}
The main achievement of this note is not 
our algorithm.  Indeed, the regularized extrapolated ($\lambda_k=1$ for all $k$)
inexact projection algorithm specified in Theorem \ref{t:implementation}  in fact has been used successfully 
for decades in diffraction imaging with heuristic stopping criteria and {\em early termination} effectively 
serving as the regularization.  
What the analysis here provides, for the first time, is a 
regularization strategy that fits naturally with many ill-posed inverse problems, and a mathematically  
sound stopping criterion.  The conventional early termination applied in practice to the unregularized problem 
can be justified fully in the framework of this regularization strategy together with approximate projections.  
While all of the regularity 
assumptions on the sets $M_\epsilon$ and $C$  are 
satisfied for the finite dimensional phase problem discussed in Section \ref{s:numerics}, 
since the unregularized phase problem with noise 
is still inconsistent, Theorem \ref{t:implementation} does not apply. If exact projections onto the 
regularized sets were computed, then Theorem 5.16 of \cite{LewisLukeMalick08} would suffice 
to prove convergence of exact alternating projections applied to the regularized problem.    Proof of convergence of 
the inexact algorithm with extrapolation strategy $\lambda_k=1$ for all $k$ 
for the regularized, inconsistent phase retrieval problem (what has in fact been applied in the application 
literature for decades) hinges on verifying that condition \ref{e:pooping_c} of Algorithm \ref{alg:inexact ap} is satisfied locally
for all iterates.    This is an open problem.  
\section*{Acknowledgements}  We thank Katharina Echternkamp, Ann-Kathrin G\"unther, Daja Herzog,
Dong-Du Mai, Jelena Panke, Aike Ruhlandt and Jan Thiart at the Institut f\"ur R\"ontgenphysik
at the University of G\"ottingen for the diffraction data used in our numerical experiments.  
% \bibliographystyle{plain}
% \bibliography{cite/master_citations}

\end{document}